\newcommand{\dx}{\, \mathrm{d}x}
\newcommand{\R}{\mathbb{R}}
\newcommand{\Z}{\mathbb{Z}}
\newcommand{\N}{\mathbb{N}}
\newcommand{\C}{\mathbb{C}}
\newcommand{\mmd}{\mathrm{d}}
\newcommand{\sinc}{\text{sinc}}
\newtheorem{theorem}{Theorem}
\newtheorem{rmk}{Remark}
\newtheorem{lemma}[theorem]{Lemma}
\newtheorem{corollary}[theorem]{Corollary}
\newtheorem*{conjecture}{Conjecture}
\newtheorem{question}{Question}
\def\Xint#1{\mathchoice
{\XXint\displaystyle\textstyle{#1}}%
{\XXint\textstyle\scriptstyle{#1}}%
{\XXint\scriptstyle\scriptscriptstyle{#1}}%
{\XXint\scriptscriptstyle\scriptscriptstyle{#1}}%
\!\int}
\def\XXint#1#2#3{{\setbox0=\hbox{$#1{#2#3}{\int}$}
\vcenter{\hbox{$#2#3$}}\kern-.5\wd0}}
\def\dashint{\Xint-}
\title[Fourier uniqueness for powers]{Fourier uniqueness pairs of powers of integers}
\author[J. P. G. Ramos]{Jo\~ao P. G. Ramos}
\author[M. Sousa]{Mateus Sousa}
\keywords{Fourier transform, uniqueness pairs.}
\address{João Pedro Ramos, Mathematical Institute, Universität Bonn \\
Endenicher Allee 60, 53115, Bonn, Germany.}
\email{jpgramos@math.uni-bonn.de}
\address{Mateus Sousa, Ludwig-Maximilans Universität München \\
Theresienstr. 39, 80333 München, Germany.}
\email{sousa@math.lmu.de}
\begin{document}
\subjclass[2010]{42A38}
\begin{abstract}  We prove, under certain conditions on $(\alpha,\beta)$, that each Schwartz function $f$ such that $f(\pm n^{\alpha}) = \widehat{f}(\pm n^{\beta}) = 0, \forall n \ge 0$ 
must vanish identically, complementing  a series of recent results involving uncertainty principles, such as the pointwise interpolation formulas by Radchenko and Viazovska and the Meyer--Guinnand construction of self-dual crystaline measures.
\end{abstract}
\maketitle

\section{Introduction}
Given an integrable function $f:\R\rightarrow\C$, we define its Fourier transform by 
\begin{equation}\label{eq:fourier_transform}
    \widehat{f}(\xi):=\int_{\R}f(x)e^{2\pi i x\cdot \xi}\dx. 
\end{equation}
Let us consider the following classical problem in Fourier analysis: 

\begin{question} Given a collection $\mathcal{C}$ of functions $f:\R\rightarrow\C$, what conditions can we impose on two sets $A,\widehat{A} \subset \R$ to ensure that the only function $f\in \mathcal{C}$ such that $f(x)= 0$ for every $x\in A$ and $\widehat{f}(\xi)=0$ for every $\xi\in\widehat{A}$ is the zero function?
\end{question} 
\noindent Inspired by the notion of Heisenberg uniqueness pairs introduced by Hedenmalm and Montes-Rodr\'igues in \cite{HMR}, (see also \cite{GS19,Lev}), we refer to such pair of sets $(A,\widehat{A})$  as a \emph{Fourier uniqueness pair} for $\mathcal{C}$ for a natural reason: the values of $f(x)$ for $x\in A$ and $\widehat{f}(\xi)$ for $\xi \in B$ determine at most one function $f\in \mathcal{C}$. For simplicity, when $A=\widehat{A}$, we will say that $A$ is a Fourier uniqueness set for $\mathcal{C}$.

\smallskip

Perhaps the most classical result which answers such a question is the celebrated Shannon--Whittaker interpolation formula, which states that a function $f\in L^2(\R)$ whose Fourier transform  $\widehat{f}$  is supported on the interval $[-\delta/2,\delta/2]$ is given by the formula
\begin{equation*}
    f(x)=\sum_{k=-\infty}^{\infty}f(k/\delta)\sinc(\delta x- k),
\end{equation*}
where convergence holds both in the $L^2(\R)$ sense and uniformly on the real line, and $\sinc(x)=\tfrac{\sin(\pi x)}{\pi x}$. This means that the pair $\frac{1}{\delta}\Z$ and $\R\backslash[-\delta/2,\delta/2]$ forms a Fourier uniqueness pair  for the collection $\mathcal{C}=L^2(\R)$. More recently, Radchenko and Viazovska \cite{RV} obtained a related interpolation formula for Schwartz functions: there are even functions $a_k\in\mathcal{S}(\R)$ such that, for any given even function $f:\R\rightarrow\C$ that belongs to the Schwartz class $\mathcal{S}(\R)$, one has the following identity:
\begin{equation}\label{eq:interpolation_schwartz}
    f(x)=\sum_{k=0}^{\infty}f(\sqrt{k})a_k(x)+\sum_{k=0}^{\infty}\widehat{f}(\sqrt{k})\widehat{a_k}(x),
\end{equation}
where the right-hand side converges absolutely. This interpolation result has as immediate consequence: the set $\sqrt{\Z_+}$ of square roots of non-negative integers is a Fourier uniqueness set for the collection of even\footnote{In \cite{RV}, the authors also have results for functions which are not even, but we chose to present this version to keep technicalities to a minimum.} Schwartz functions. 

\smallskip The two theorems we just presented to motivate our question are, in fact, also instances of the intimate relationship between interpolation and summation formulas. Indeed, as previously mentioned, the Shannon--Whittaker interpolation formula is directly related to the Poisson summation formula
\begin{equation*}
\sum_{m \in \Z} f(m) = \sum_{n \in \Z} \widehat{f}(n),
\end{equation*}
and the result by Radchenko and Viazovska is, in fact, a by-product of the development of several summation formulas, having relationship to modular forms and the sphere packing problem (see, for instance, \cite{CKMRV, CKMRV2, Viazovska8}). In fact, the lower bound for the Fourier analysis problem 
corresponding to the sphere packing problem (see \cite{CE03}) is directly related to the Poisson summation formula for lattices: if $\Lambda \subset \R^n$ is a lattice with fundamental region having volume 1, 
then 
\[
\sum_{\lambda \in \Lambda} f(\lambda) = \sum_{\lambda^{*} \in \Lambda^*} \widehat{f}(\lambda^*),
\]
where $\Lambda^*$ denotes the dual lattice of $\Lambda.$ Also, in \cite{CohnGoncalves}, the authors need a summation formula stemming from an Eisenstein series $E_6,$ which implies, in particular, that for each radial Schwartz function $f: \R^{12} \to \C,$ there exists constants $c_j>0$ such that 
\[
f(0) - \sum_{j \ge 1} c_j f(\sqrt{2j}) = - \widehat{f}(0) + \sum_{j \ge 1} c_j \widehat{f}(\sqrt{2j}).
\]
These concepts seem to be all tethered to the notion of \emph{crystaline measures} and self-duality, as discussed in \cite{LO13, LO15, Meyer}. A crystaline measure is essentially a tempered distribution with locally finite support whose Fourier transform has these same properties. For instance, Poisson summation implies that 
\[
\delta_{\Z} = \widehat{\delta_{\Z}},
\]
which shows that the usual delta distribution at the integers is not only a crystaline measure, but also a \emph{self-dual} one with respect to the Fourier transform. Meyer then discusses other examples of crystaline measures with certain self-duality properties, and, similarly to the strategy used by Radchenko and Viazovska, 
uses modular forms to construct explicity examples of non-zero crystaline measures $\mu$ supported in $\{\pm\sqrt{k+a}, k \in \Z\},$  for $a \in \{9,24,72\}.$ It is interesting to point out that Meyer calls out the readers attention to the highly unexplored problem of analyzing when there is a non-zero
crystaline measure $\mu$ such that both itself and its Fourier transform have support on a given locally finite set $\{\lambda_k\colon k \in \Z\}.$ 

\smallskip Back to Fourier uniqueness pairs, while both the Shannon--Whittaker and Radchenko--Viazovska results provide Fourier uniqueness pairs by means of interpolation identities, and such explicit formulas are not always available and usually depend on special properties of the sets involved, which are somewhat rigid. In the case of the Shannon--Whittaker formula, the set $\frac{1}{\delta}\Z$ plays an special role because of the Poisson summation formula. In the case of the Radchenko--Viazovska interpolation, the set $\sqrt{\Z_+}$ becomes important due to
special properties of certain modular forms involved in their proofs. Perturbing these sets breaks down the proofs of these theorems, and sometimes even the existence of such interpolation formulas. Nevertheless, the Fourier uniqueness pair property is inherently less rigid as a condition than an interpolation formula, which might lead to uniqueness results even in the absence of possible interpolation formulas.  

\smallskip
For instance, define a set $\Lambda\subset \R$ to be uniformly separated if there is a number $\delta=\delta(\Lambda)>0$ such that $|\lambda - \lambda'|>\delta$ whenever $\lambda,\lambda'\in\Lambda$ and $\lambda\neq \lambda'$. Given an uniformly separated set $\Lambda$, we define its lower density and upper density, respectively, as the numbers
\begin{align}
    \mathcal{D}^{-}(\Lambda)&=\liminf_{R\rightarrow\infty} \inf_{x\in\R}\frac{|\Lambda\cap[x-R,x+R]|}{2R} \\
    \mathcal{D}^{+}(\Lambda)&=\limsup_{R\rightarrow\infty} \sup_{x\in\R}\frac{|\Lambda\cap[x-R,x+R]|}{2R}.
\end{align}
And when these numbers coincide we call it the density of $\Lambda$. As a corollary of the work of Beurling \cite{Beurling} and Kahane \cite{Kahane} about sampling sets, any pair $\Lambda$ and $\R\backslash[-2\pi\delta,2\pi\delta]$ forms uniqueness sets for $L^2(\R)$ if $\Lambda$ is uniformly separated and $\mathcal{D}^{-}(\Lambda)>\delta$. This means: any uniformly separated set that is more dense than $\frac{1}{\delta}\Z$ produces a pair of uniqueness sets for $L^2(\R)$, and one can readily see that this condition, at least in terms of density, is essentially sharp just by analysing subsets of $\frac{1}{\delta}\Z$.   

\smallskip.

{Another instance of this density situation has to do with the aforementioned Heisenberg uniqueness pairs. In \cite{HMR}, the authors study pairs of sets $(\Gamma,\Lambda)$, where $\Gamma \subset \R^2$, which is a finite disjoint union of smooth curves, and $\Lambda \subset \R^2$, which have the following property: whenever a measure $\mu$ supported in $\Gamma$, which is absolutely continuous with respect to the arc length measure of $\Gamma$, has Fourier transform $\widehat{\mu}$ equal to zero on the set $\Lambda$, then $\mu=0$. If a pair  $(\Gamma,\Lambda)$ has this property, it is called a Heisenberg uniqueness pair. One of the main results of \cite{HMR} is the following: Let $\Gamma=\{(x,y)\in\R^2\,:\,xy=1\}$ be the hyperbola, and $\Lambda_{\alpha,\beta}$ be the lattice cross
$$(\alpha\Z\times\{0\})\cup(\{0\}\times\beta\Z),$$
where $\alpha$ and $\beta$ are positive numbers. Then $(\Gamma,\Lambda_{\alpha\beta})$ forms a Heisenberg uniqueness pair if and only if $\alpha\beta \leq 1$. This provides yet another example of the interplay between concentration and  uniqueness properties: there is a threshold of concentration one needs to ask in order to maintain the uniqueness property, and increasing the concentration does not affect the uniqueness property.
}

\smallskip
By comparing the aforementioned interpolation theorems to the considerations in \cite{Meyer} about crystaline measures, one is naturally lead towards the following modified version of Meyer's question: if a sequence is ``more concentrated than $\sqrt{\Z}$", does it define a Fourier uniqueness set? For which notion of ``more concentrated'' could such a result possibly hold? We obtain partial progress towards this problem. 
\begin{theorem}\label{th:mainthm} Let $0< \alpha, \beta <1$ and $f\in \mathcal{S}(\R)$. Then 
\begin{itemize} 
    \item[(A)] If $f(\pm \log (n+1)))=0$ and $f(\pm n^{\alpha})=0$ for every $n\in \N$,  then $f\equiv 0$.
    \item[(B)] Let $(\alpha,\beta) \in A$, where
    \begin{align*} A  = & \left\{(\alpha,\beta) \in [0,1]^2\colon \alpha+\beta<1, 
 \text{ and either }\alpha < 1 - \frac{\beta}{1-\alpha-\beta} 
 \text{ or } \beta < 1 - \frac{\alpha}{1-\alpha-\beta}\right\}.
\end{align*}
If $f(\pm n^{\alpha})=0$ and $\widehat{f}(\pm n^{\beta})=0$ for every $n\in \N$, then $f\equiv 0$.
\end{itemize}
\end{theorem}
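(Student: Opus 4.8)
The plan is to turn the vanishing conditions into a self‑improving estimate that upgrades the a priori Schwartz decay of $f$ and $\widehat f$ into Gaussian decay strong enough to trigger Hardy's uncertainty principle, after which $f\equiv 0$ is immediate. Everything hinges on a single \emph{decay‑transfer} step, which I describe for part (B). Fix $S>0$ and let $h_S:=\widehat{f\cdot\mathbf{1}_{[-S,S]}}$, an entire function of exponential type $2\pi S$; since $h_S-\widehat f=-\widehat{f\cdot\mathbf{1}_{|x|>S}}$ we have $\|h_S-\widehat f\|_{L^\infty(\R)}\le\int_{|x|>S}|f|$. On a dyadic block $[R/2,R]$ the points of $\{n^\beta:n\in\N\}$ lying there are separated by a gap of order $R^{1-1/\beta}$, which tends to $0$ because $\beta<1$; hence as soon as $R\gtrsim S^{\beta/(1-\beta)}$ the block is sampled by $\{n^\beta\}$ above the Nyquist rate for exponential type $2\pi S$. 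A standard local stable‑sampling inequality for exponential‑type functions then bounds $\|h_S\|_{L^\infty}$ on the interior of the block by a constant times $\sup_{n^\beta\in[R/2,R]}|h_S(n^\beta)|$ plus a negligible term; and since $\widehat f(n^\beta)=0$, that supremum is at most $\|h_S-\widehat f\|_{L^\infty}\le\int_{|x|>S}|f|$. Feeding in a prior bound $|f(x)|\le C|x|^{-a}$ for large $|x|$ and choosing the optimal $S\sim R^{(1-\beta)/\beta}$, one obtains $|\widehat f(\xi)|\le C'|\xi|^{-b}$ for large $|\xi|$, with $b=(a-1)\tfrac{1-\beta}{\beta}$. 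By symmetry (swap $f\leftrightarrow\widehat f$ and $\alpha\leftrightarrow\beta$), a decay bound on $\widehat f$ with exponent $a$ yields one on $f$ with exponent $(a-1)\tfrac{1-\alpha}{\alpha}$.

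Iterating the two half‑steps, one full cycle multiplies the decay exponent by $\tfrac{(1-\alpha)(1-\beta)}{\alpha\beta}$, and this factor exceeds $1$ precisely when $(1-\alpha)(1-\beta)>\alpha\beta$, i.e.\ when $\alpha+\beta<1$ — the first inequality defining $A$. Starting from the Schwartz bounds and iterating therefore makes the exponents grow geometrically. The delicate point, and the reason $A$ is strictly smaller than $\{\alpha+\beta<1\}$, is that to extract genuine Gaussian decay (rather than the tautological ``faster than every polynomial'') one must track the implicit constants $C_k$ and the onset scales $X_k$ through the recursion: the onset scale roughly obeys $X\mapsto X^{\beta/(1-\beta)}$ in the $\beta$‑step and $X\mapsto X^{\alpha/(1-\alpha)}$ in the $\alpha$‑step, so it stays bounded only when the exponent involved is $<1$, while the constants must grow slowly enough that $\inf_k C_k|x|^{-a_k}$ eventually beats $e^{-\pi x^2}$. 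Balancing these two demands is exactly the computation that produces the conditions $\alpha<1-\tfrac{\beta}{1-\alpha-\beta}$ and $\beta<1-\tfrac{\alpha}{1-\alpha-\beta}$; the ``either/or'' reflects that it suffices to run the bootstrap principally through whichever of $\alpha,\beta$ is small — once $f$, say, has super‑Gaussian decay, the error $\int_{|x|>S}|f|$ in the $\beta$‑step is super‑exponentially small and a single further application pushes $\widehat f$ below the Gaussian threshold as well. One then has $|f(x)|\le Ce^{-\pi(1+\varepsilon)x^2}$ and $|\widehat f(\xi)|\le Ce^{-\pi(1+\varepsilon)\xi^2}$ for some $\varepsilon>0$, and the sharp Hardy uncertainty principle gives $f\equiv 0$.

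Part (A) is the degenerate limiting case of this scheme: the gaps of $\{\pm\log(n+1)\}$ near height $R$ are of order $e^{-R}$, smaller than those of any power sequence, so the associated half‑step oversamples every exponential type at every sufficiently large scale and has bounded onset scales unconditionally — it behaves like the ``$\beta\to0$'' endpoint, where $\beta/(1-\beta)\to0$. The recursion then closes under the sole remaining requirement that the other sequence furnish a real exponent gain at each cycle, which holds for every $0<\alpha<1$; this is why the hypothesis there reduces to $\alpha<1$, the limiting form of the conditions defining $A$.

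The main obstacle is precisely the constant‑ and onset‑scale bookkeeping of the second paragraph: iterating the decay‑transfer step naively just reproduces Schwartz decay, and the crux of the theorem is that, in the range $A$, one can track the constants sharply enough to land \emph{strictly below} the critical Gaussian — only there does an uncertainty principle bite. The remaining ingredients — the $L^1$‑truncation bound, the local stable‑sampling inequality for exponential type, and the sharp Hardy theorem — are classical, and assembling them is routine.
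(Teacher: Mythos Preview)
Your overall plan---turn the vanishing hypotheses into a self-improving decay estimate and finish with an uncertainty principle---is the paper's plan too, but the two implementations differ at both the transfer step and the endgame, and in your version each of those hides a real gap.

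\emph{The transfer step.} The paper does not truncate and invoke a sampling inequality; it uses Rolle's theorem to place a zero of $f^{(k)}$ in each interval $[n^{\alpha},(n+k+1)^{\alpha}]$ and then the elementary bound $|f^{(k)}(x)|\le(2\pi)^{k+1}I_{k+1}(\widehat f)\,|a^{(k)}_{m+1}-a^{(k)}_m|$, iterated via the fundamental theorem of calculus. All the loss is concentrated in the moments $I_k(f)=\int|f(y)||y|^k\,\mathrm{d}y$, whose growth in $k$ is computed explicitly and is precisely what produces the limiting exponent $L_1=\alpha/(1-\alpha-\beta)$ and hence the region~$A$. Your scheme shifts that loss onto the constant in the ``local stable-sampling inequality'', which you neither state nor bound. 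This is not a deferred technicality: if that constant were uniform in $S$ and $R$ (as your exponential-regime transfer $p\mapsto p\,(1-\beta)/\beta$ tacitly assumes), one full cycle would multiply the exponential-decay exponent by $(1-\alpha)(1-\beta)/(\alpha\beta)>1$ with onset scales converging, forcing $f$ to have compact support and hence $f\equiv0$ for \emph{every} $\alpha+\beta<1$---the paper's open Conjecture, not its Theorem. The paper's final section explains exactly why their method stalls at~$A$; your argument needs an analogous obstruction and you have not located it. Note also that the standard sampling theorems are global, while $\{n^\beta\}$ undersamples type $2\pi S$ near the origin, so a genuinely local inequality with controlled constants is required and is not off-the-shelf.

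\emph{The endgame.} The paper does not use Hardy. From the best decay the iteration actually yields, namely $|f(x)|\lesssim e^{-c|x|^{1/L_1}}$ (and no better---$L_1$ is the fixed point of the recursion), it extends $\widehat f$ to an entire function of order at most $1/(1-L_1)$ and invokes the converse of Hadamard's factorisation: the zeros $\{\pm n^\beta\}$ have convergence exponent $1/\beta$, so one needs only $\beta<1-L_1$, which is exactly the inequality defining~$A$. If instead you demand Gaussian decay to feed the sharp Hardy theorem, you need $1/L_1>2$, i.e.\ $L_1<\tfrac12$; on the diagonal $\alpha=\beta$ this is $\alpha<\tfrac14$, strictly smaller than the paper's $\alpha<1-\tfrac{\sqrt2}{2}$. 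So Hardy, fed with the decay the bootstrap can honestly deliver, does not recover~$A$; the Hadamard/entire-function route is what buys the extra range, and your assertion that the bookkeeping ``is exactly the computation that produces'' the region~$A$ is not correct as stated.
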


\begin{figure}
\centering 
\includegraphics[width=0.5\textwidth]{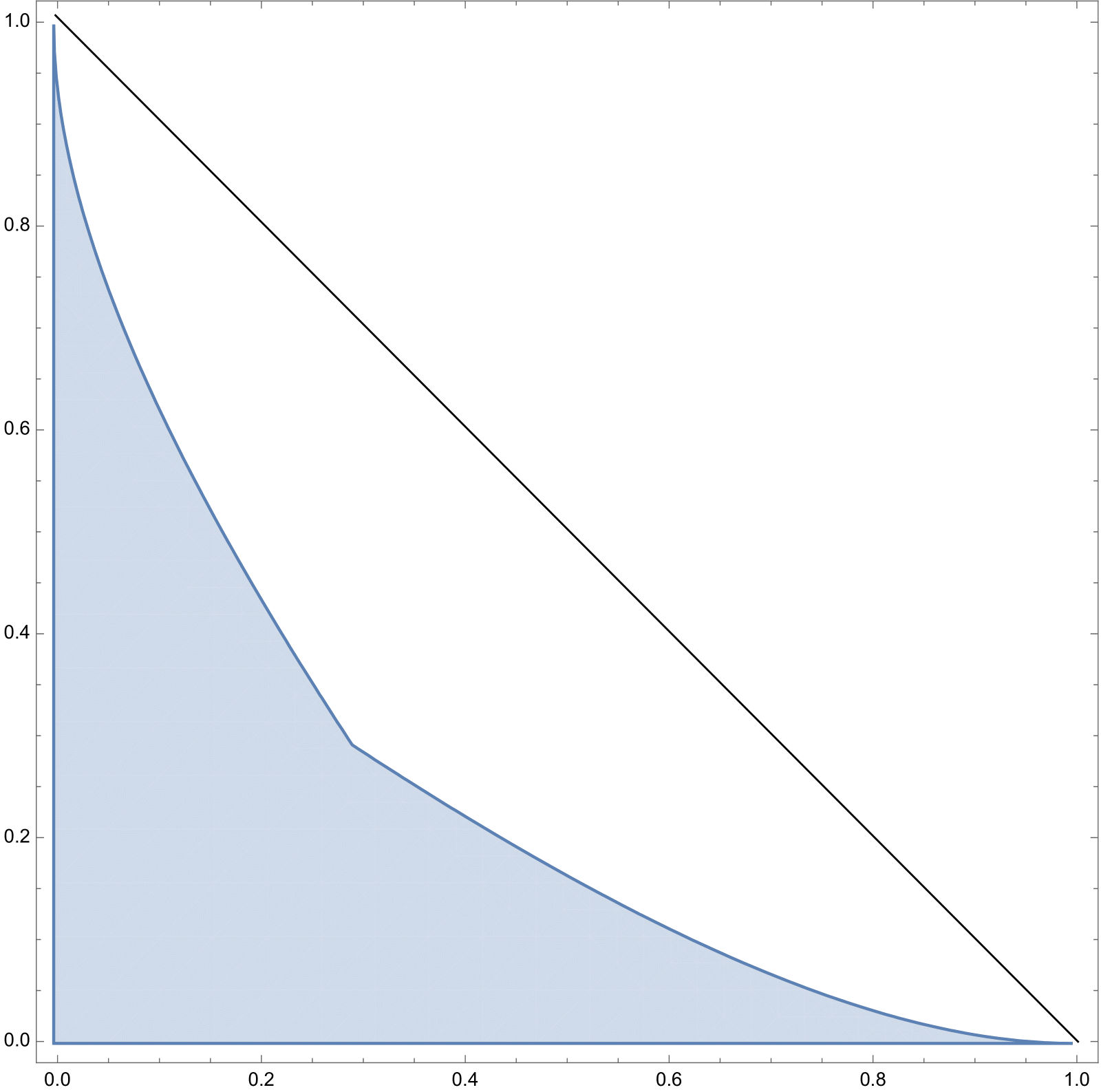}
\caption{In {\color{blue} blue}, the closure of the region $A$, with the line $\alpha + \beta =1$ in \textbf{black}.}
\end{figure}

Theorem \ref{th:mainthm} will follow by complex analytic considerations. We will prove that $f$ and $\widehat{f}$ actually have better decay than usual Schwartz functions by using the fact that the sequence of zeros of $f$ and $\widehat{f}$ grows at a certain rate, as well as the information we can obtain about the zeros of their derivatives. Once the decay is obtained, we prove either $f$ or $\widehat{f}$ admits an analytic extension of finite order, and conclude $f$ is the zero function by invoking the converse of Hadamard's theorem about growth of zeros of an entire function of finite order.
It will also become clear from the proof that the condition on the exponents $(\alpha,\beta)$ on part (ii) of theorem \ref{th:mainthm}  is a barrier of our method. We postpone a more detailed discussion about sharpness of our results to the final Section of this paper. 

\smallskip Lastly, in order to better compare our results with the ones in \cite{Meyer} and \cite{RV} we state the diagonal case of Theorem \ref{th:mainthm}.

\begin{corollary}\label{th:diagonal} Let $\alpha < 1 - \frac{\sqrt{2}}{2}.$ Then, if $f \in \mathcal{S}(\R)$ is such that $f(\pm n^{\alpha}) = \widehat{f}(\pm n^{\alpha}) = 0$ for each $n \in \mathbb{N},$ it follows that $f \equiv 0.$ 
\end{corollary}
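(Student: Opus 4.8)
The plan is to deduce this directly from part (B) of Theorem \ref{th:mainthm} by specializing to the diagonal $\beta = \alpha$, so that the entire content of the corollary reduces to an elementary algebraic check that the pair $(\alpha,\alpha)$ belongs to the region $A$ precisely when $\alpha < 1 - \frac{\sqrt 2}{2}$. First I would observe that on the diagonal the two alternative inequalities defining $A$ coincide, so that membership $(\alpha,\alpha) \in A$ amounts to the two conditions $2\alpha < 1$ and $\alpha < 1 - \frac{\alpha}{1 - 2\alpha}$.

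Next I would analyze the second inequality. Since $1 - \frac{\sqrt 2}{2} < \frac12$, we may assume $1 - 2\alpha > 0$ and clear denominators: the inequality $\alpha < 1 - \frac{\alpha}{1-2\alpha}$ is then equivalent to $\alpha(1-2\alpha) < (1-2\alpha) - \alpha$, i.e. to $2\alpha^2 - 4\alpha + 1 > 0$. The roots of $2t^2 - 4t + 1$ are $t = 1 \pm \frac{\sqrt 2}{2}$, so on the interval $[0,1)$ this quadratic is positive exactly for $\alpha < 1 - \frac{\sqrt 2}{2}$; note that such an $\alpha$ automatically satisfies $2\alpha < 1$, so the first condition is subsumed. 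Hence $(\alpha,\alpha) \in A$ if and only if $\alpha < 1 - \frac{\sqrt 2}{2}$.

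Finally, for such $\alpha$ the hypotheses $f(\pm n^\alpha) = \widehat f(\pm n^\alpha) = 0$ for all $n \in \N$ are exactly the hypotheses of Theorem \ref{th:mainthm}(B) with $\beta = \alpha$, whence $f \equiv 0$. There is no genuine obstacle here: the entire analytic work — the improved decay of $f$ and $\widehat f$ forced by the density of the zero sets and the control on zeros of derivatives, the construction of a finite-order analytic extension, and the appeal to the converse of Hadamard's factorization theorem — is already carried out in the proof of Theorem \ref{th:mainthm}. The only point worth recording separately is that $1 - \frac{\sqrt 2}{2}$ is the precise threshold at which the diagonal $\{(\alpha,\alpha)\}$ leaves the region $A$.
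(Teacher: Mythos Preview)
Your proposal is correct and follows exactly the approach implied by the paper: the corollary is stated there simply as ``the diagonal case of Theorem~\ref{th:mainthm}'' with no further proof, and your algebraic verification that $(\alpha,\alpha)\in A$ if and only if $\alpha < 1 - \tfrac{\sqrt{2}}{2}$ is precisely the omitted check.
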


\subsection{Organisation and notation} This article is organised as follows. In Section \ref{sec:prelim}, we mention a couple of basic ideas associating the denseness of zeros of a function and its pointwise decay. In Section \ref{sec:proof_i}, we prove the first assertion in Theorem \ref{th:mainthm}, and in Section \ref{sec:proof_ii} we work upon the ideas in the previous Section to 
prove the second part of Theorem \ref{th:mainthm}. Finally, in Section \ref{sec:comments} we make remarks, mention some corollaries of our methods and state conjectures based on the proofs presented. 

\smallskip 
Throughout this manuscript, we will use Vinogradov's modified notation $A \lesssim B$ or $A=O(B)$ to denote the existence of an absolute constant $C>0$ such that $A \le C \cdot B.$ If we allow $C$ to explicitly depend upon a parameter $\tau,$ we will write 
$A \lesssim_{\tau} B.$ In general, $C$ will denote an absolute constant that may change from line to line or from paragraph to paragraph in the argument. Finally, we adopt \eqref{eq:fourier_transform} as our normalisation for the Fourier transform. 

\section{Preliminaries}\label{sec:prelim}

\subsection{Zeros of Schwartz functions and decay} We begin by pointing out a few basic calculus facts.   \label{sec:zero_decay}

\begin{itemize}
    \item[(I)] First of all, by the mean value theorem, between two zeros of the $k$-th derivative of a function, there is a zero of the $(k+1)$-th  derivative. This means as long as there is a sequence of zeros of $f$ that converge to infinity, by a simple induction argument, there is a sequence $\{a_m^{(k)}\}_{m\in \N}$ such that
    \begin{itemize}
        \item[(I.i)] $0<a_m^{(k)}< a_{m+1}^{(k)}$ and $$\lim_{m\rightarrow+\infty}a_m^{(k)}=+\infty.$$
        \item[(I.ii)] $f^{(k)}(a_m^{(k)})=0$, and for every $m\in \Z$.
        \item[(I.iii)] For all $m\in \N,$ it holds that $[a_{m+1}^{(k)},a_{m}^{(k)}]$ is contained in the interval $[a_m,a_{m+k+1}]$, where $\{a_n\}$ are the zeros of the function $f.$ This, in particular, implies 
    $$|a_{m+1}^{(k)}-a_{m}^{(k)}|\leq |a_{m+k+1}-a_m|.$$
    \end{itemize}
\item[(II)] One can built an analogous sequence with negative zeros of the $k$-th derivative of $f$. Of course, the same can be done for $[\widehat{f}]^{(k)}$. 
\end{itemize}
Given a function $g\in \mathcal{S}(\R)$, we will use the following notation
\begin{align*}
    I_k(g)=\int_{\R} |g(y)| |y|^k \, \mmd y.
\end{align*}
The integrals $I_k(f)$ and $I_k(\widehat{f})$ will play an important role because of the following observation: whenever a point $x$ lies in an interval of the form $[a_{m+1}^{(k)},a_{m}^{(k)}]$, Fourier inversion implies
\begin{align} \label{eq:Fourier_inverstion_step}
\begin{split}
    |f^{(k)}(x)| &= |f^{(k)}(x)-f(a_{m}^{(k)})| \\
 &=  \left|\int_{\R} \widehat{f}(y)(2\pi i y)^{k} [e^{2 \pi i y x}- e^{2 \pi i y a_{m}^{(k)}}] \, \mmd y \right| \cr
 & \le (2\pi)^{k+1} I_{k+1}(\widehat{f})|x -a_{m}^{(k)}| \\& \le (2\pi)^{k}I_{k+1}(\widehat{f})|a_{m+1}^{(k)}-a_{m}^{(k)}| .
\end{split}
\end{align}
This means that the rate at which the zeros of the derivatives accumulate at infinity provides extra decay for each derivative itself. We will use this observation iteratively to improve decay bounds on our functions. 

\subsection{Fourier transforms of functions with strong decay} In addition to connecting location of zeros to decay of functions, we need to connect decay of a function to properties of its Fourier transform. The next Lemma is going to be of crucial 
importance for us throughout the proof. 

\begin{lemma}\label{th:analytic_continuation} Let $f \in \mathbb{S}(\R)$ be such that there exist two constants $C>0, \, A > 1$ for which $|f(x)| \lesssim e^{-C|x|^A}, \, \forall x \in \R.$ Then its Fourier transform 
$\widehat{f}$ can be extended to the whole complex plane as an analytic function with order at most $\frac{A}{A-1}.$ That is, for all $\varepsilon >0,$ 
$$ |\widehat{f}(z)| \lesssim_{\varepsilon} e^{|z|^{\frac{A}{A-1} + \varepsilon}}.$$ 
\end{lemma}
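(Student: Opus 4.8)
The plan is to bound the Fourier integral $\widehat{f}(z) = \int_\R f(x) e^{2\pi i x z}\, dx$ directly for complex $z = u + iv$, splitting the exponential into its modulus $e^{-2\pi x v}$ and using the Gaussian-type decay hypothesis $|f(x)| \lesssim e^{-C|x|^A}$ to absorb it. The analyticity of the extension is immediate: since $|f(x) e^{2\pi i x z}| \le |f(x)| e^{2\pi |x| |v|}$ and $e^{-C|x|^A}$ beats $e^{2\pi |x||v|}$ for $A>1$ (independently of how large $|v|$ is, once $|x|$ is large), the integral converges absolutely and locally uniformly in $z$, and differentiation under the integral sign is justified by the same domination. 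So the content of the lemma is entirely the growth bound, and the proof reduces to an elementary one-variable optimization.

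First I would fix $z$ with $|z| = R$ and estimate
\[
|\widehat{f}(z)| \le \int_\R |f(x)|\, e^{2\pi |x| R}\, dx \lesssim \int_\R e^{-C|x|^A + 2\pi R |x|}\, dx.
\]
The exponent $\phi(x) = -C x^A + 2\pi R x$ (for $x>0$; the negative side is symmetric) is maximized at $x_* = \left(\tfrac{2\pi R}{CA}\right)^{1/(A-1)}$, giving a maximum value of the form $c_{A,C}\, R^{A/(A-1)}$ for an explicit constant $c_{A,C}>0$. Thus $e^{\phi(x)} \le e^{c_{A,C} R^{A/(A-1)}}$ pointwise. To turn this pointwise bound on the integrand into a bound on the integral without losing the exponent, I would split $\R = \{|x| \le 2 x_*\} \cup \{|x| > 2 x_*\}$: on the bounded piece the integral is at most $4 x_* \cdot e^{c_{A,C} R^{A/(A-1)}}$, and $x_*$ grows only polynomially in $R$, hence is absorbed into the $\varepsilon$ in the exponent; on the tail $|x| > 2x_*$ one has $2\pi R |x| \le \tfrac{1}{2} C|x|^A$ (since $|x| > 2x_*$ makes the linear term dominated by half the power term), so $\phi(x) \le -\tfrac{C}{2}|x|^A$ there and the tail integral is bounded by an absolute constant. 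Combining, $|\widehat{f}(z)| \lesssim x_* e^{c_{A,C}|z|^{A/(A-1)}} + O(1) \lesssim_\varepsilon e^{|z|^{A/(A-1)+\varepsilon}}$, which is exactly the claim.

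There is essentially no serious obstacle here; the only mildly delicate point is bookkeeping — making sure the polynomial factor $x_*$ and the implied constant $e^{c_{A,C}|z|^{A/(A-1)}}$ are genuinely swallowed by the stated bound $e^{|z|^{A/(A-1)+\varepsilon}}$, which follows because for any $\varepsilon>0$ and any constants $a,b>0$ one has $a\,|z|^{b}\,e^{c|z|^{s}} \le e^{|z|^{s+\varepsilon}}$ once $|z|$ is large enough, and for bounded $|z|$ everything is trivially controlled. One should also remark that the hypothesis $A>1$ is exactly what makes $\tfrac{A}{A-1}$ finite and positive, so $\widehat f$ has finite order; if $A=1$ the function $\widehat f$ need only be bounded on a strip, and if $A<1$ no such decay is possible for a nonzero Schwartz function, so the stated range is the natural one. (For completeness one could note the symmetric statement — that strong decay of $\widehat f$ yields a finite-order extension of $f$ — follows identically, since $f = \widehat{\widehat{f}}(-\,\cdot\,)$, and this is the form that will actually be used when applying the lemma to whichever of $f, \widehat f$ turns out to decay fast enough.)
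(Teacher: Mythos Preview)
Your argument is essentially the same as the paper's: both extend $\widehat{f}$ via the integral, locate the critical point $x_* = (2\pi R/(CA))^{1/(A-1)}$ of the exponent, split into a central interval and a tail, bound the central piece by $(\text{length})\times e^{\max}$, and on the tail dominate the linear term by a fraction of $C|x|^A$. One small bookkeeping slip: your cutoff at $2x_*$ does not actually give $2\pi R|x| \le \tfrac{1}{2}C|x|^A$ for all $A>1$ (check $A=2$, say); you need the threshold to be $K_A x_*$ for an $A$-dependent constant $K_A$, which is exactly what the paper does, and after that the rest goes through unchanged.
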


\begin{proof} Let $z = \xi + i \eta \in \C.$ Without loss of generality, in what follows we assume that $\text{Re}(\eta) < 0.$ We simply write
\[
\widehat{f}(z) = \int_{\R} e^{2 \pi i z \cdot x} f(x) \, \mmd x.
\]
By the decay property of $f$, it is easy to see that this integral is well-defined for each $z \in \C,$ and Morera's theorem tells us that this extension is, in fact, entire. For the assertion about its order, we have the trivial bound 
\[
|\widehat{f}(z)| \le \int_{\R} e^{-2 \pi \eta x} e^{-C|x|^A} \, \mmd x.
\]
In order to prove that the expression on the right hand side above is $\lesssim_{\varepsilon} e^{|z|^{\frac{A}{A-1} + \varepsilon}},$ we split the real line as 
$$ \R = A_{\eta} \cup B_{\eta} \cup C_{\eta},$$ 
where 
\begin{align*}
A_{\eta} = \left\{ x \in \R \colon \left|x-\left(\frac{2\pi |\eta|}{C A}\right)^{1/(A-1)} \right| \le K_A \left(\frac{2\pi |\eta|}{C A}\right)^{1/(A-1)}\right\}, \cr 
B_{\eta} = \left\{ x \in \R \colon x > (K_A + 1) \left(\frac{2\pi |\eta|}{C A}\right)^{1/(A-1)}\right\}, \cr
C_{\eta} = \left\{ x \in \R \colon x < (1 - K_A) \left(\frac{2\pi |\eta|}{C A}\right)^{1/(A-1)}\right\},
\end{align*}
and rewrite our integral as 
\begin{align*}
\int_{\R} e^{-2 \pi \eta x} e^{-C|x|^A} \, \mmd x & = \int_{A_{\eta}} e^{-2 \pi \eta x} e^{-C|x|^A} \, \mmd x + \int_{B_{\eta}} e^{-2 \pi \eta x} e^{-C|x|^A} \, \mmd x + \int_{C_{\eta}} e^{-2 \pi \eta x} e^{-C|x|^A} \, \mmd x \cr
						  & =: I_1 + I_2 + I_3.
\end{align*}
On the interval over which we integrate in $I_1,$  $-2 \pi \eta x - C|x|^A$ is at most (an absolute constant depending on $A$ times) $|\eta|^{\frac{A}{A-1}}.$ This holds because the center of the interval $A_{\eta}$ is the critical point of 
$-2\pi \eta x - C |x|^A$ where this function attains its maximum. As we know that $|A_{\eta}| \lesssim_A |\eta|^{\frac{1}{A-1}},$ 
it follows that 
\begin{equation}\label{eq:first_bound}
|I_1| \lesssim |\eta|^{\frac{1}{A-1}} e^{C_A |\eta|^{\frac{A}{A-1}}}.
\end{equation}
On either the interval defining $I_2$ or on the one defining $I_3,$ we see that, for $K_A, \tilde{C}_A>0$ large enough depending on $A$, it holds that 
\[
-2\pi \eta x - C|x|^A \le -\tilde{C}_A |x|^A. 
\]
Therefore, 
\begin{equation}\label{eq:second_bound}
|I_2| + |I_3| \lesssim \int_{\eta^{\frac{1}{A-1}}}^{+\infty} e^{-C'_A|x|^A} \, \mmd x \lesssim e^{-C''_A|\eta|^{\frac{A}{A-1}}}. 
\end{equation}
As \eqref{eq:first_bound} dominates \eqref{eq:second_bound}, we obtain that 
\[
|\widehat{f}(z)| \lesssim_A |\eta|^{\frac{1}{A-1}} e^{C_A |\eta|^{\frac{A}{A-1}}}. 
\]
As polynomials factors in $|\eta|$ decay slower than any exponential $e^{|\eta|^{\epsilon}},$ we finish the proof of the result, as $|\eta| \le |z|.$ 
\end{proof}

As an immediate corollary, we obtain the following statement, which will be particularly useful in Section \ref{sec:proof_i}.

\begin{corollary}\label{th:analytic_order1} Let $f \in \mathbb{S}(\R)$ be such that, for each $A >1,$ there is a constant $C_A >0$ such that $|f(x)| \lesssim_A e^{-C_A|x|^A}, \, \forall x \in \R.$ Then its Fourier transform
can be extended to the whole complex plane as an analytic function with order at most 1. 
\end{corollary}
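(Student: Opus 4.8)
The plan is to iterate Lemma \ref{th:analytic_continuation} over all admissible exponents and pass to a limit. First I would fix an arbitrary $A>1$. By hypothesis there is a constant $C_A>0$ with $|f(x)| \lesssim_A e^{-C_A|x|^A}$ for every $x\in\R$; this is precisely the hypothesis of Lemma \ref{th:analytic_continuation}, with the constants $C$ and $A$ there taken to be $C_A$ and $A$. Hence $\widehat f$ extends to an entire function satisfying, for every $\varepsilon>0$,
\[
|\widehat f(z)| \lesssim_{A,\varepsilon} e^{|z|^{\frac{A}{A-1}+\varepsilon}}.
\]
One should note here that the extension furnished by the lemma is one and the same entire function for every choice of $A$: Morera's theorem gives that each such extension is entire, and since they all restrict to $\widehat f$ on $\R$, the identity theorem forces them to coincide. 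Thus all of the above growth estimates apply to a single entire function, which we continue to denote $\widehat f$.

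Next I would recall that the order of an entire function $g$ is
\[
\rho(g)=\limsup_{r\to\infty}\frac{\log\log\left(\max_{|z|=r}|g(z)|\right)}{\log r},
\]
and that the displayed bound immediately gives $\rho(\widehat f)\le \frac{A}{A-1}+\varepsilon$ for every $\varepsilon>0$, hence $\rho(\widehat f)\le \frac{A}{A-1}$. Since $A>1$ was arbitrary and $\inf_{A>1}\frac{A}{A-1}=1$, letting $A\to\infty$ yields $\rho(\widehat f)\le 1$, which is the desired conclusion.

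There is essentially no real obstacle in this argument; it is a genuine corollary of the previous lemma. The only step deserving a word of care is the observation that the analytic continuation produced by Lemma \ref{th:analytic_continuation} does not depend on the auxiliary exponent $A$, so that the family of bounds obtained by varying $A$ can legitimately be applied to the same function before optimizing in $A$.
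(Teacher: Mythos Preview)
Your argument is correct and is exactly the intended one: the paper states this as an ``immediate corollary'' of Lemma \ref{th:analytic_continuation} with no further proof, and the natural way to make that precise is to apply the lemma for each $A>1$ and then let $A\to\infty$ so that $\frac{A}{A-1}\to 1$. Your remark that the entire extension is independent of $A$ (via the identity theorem) is the only point worth making explicit, and you have done so.
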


\section{Proof of (A)}\label{sec:proof_i}

\subsection{Obtaining decay for $f$} The first idea is to exploit the considerations in Section \ref{sec:zero_decay} to obtain decay for $f.$ We must, however, obtain decay on the Fourier transform to somehow improve the 
decay on $f$ we obtain at each step. The following Lemma is the key ingredient to this iteration scheme. 

\begin{lemma}\label{th:lemma_decay_log} Let $f \in \mathcal{S}(\R),$ and assume that $f(\pm \log(n+1))=0$ and $\widehat{f}(\pm n^{\alpha})=0$ for every $n \in \N,$ where $\beta \in (0,1).$ Then, for $|x| > \log(k+1)$ and $|\xi| > (2j+1)^{\alpha},$ one has 
\begin{align}\label{eq:decay_logs}
\begin{split}
|f(x)| &\le k(2\pi)^k ((k+1)!)^3 I_k(\widehat{f}) e^{-k|x|} = \tau_k e^{-k|x|},  \cr 
|\widehat{f}(\xi)|  &\leq (j+1)!(2^{2-\alpha}\pi)^{j+1}\alpha^j I_{j}(f)|\xi|^{j\left(\frac{\alpha -1}{\alpha}\right)}=\widehat{C}_j|\xi|^{j\left(\frac{\alpha -1}{\alpha}\right)}.
\end{split}
\end{align}
\end{lemma}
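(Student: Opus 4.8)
The plan is to run the bootstrapping scheme outlined in Section \ref{sec:zero_decay} simultaneously on $f$ and $\widehat f$, using that the zeros of $f$ sit at $\pm\log(n+1)$ (so consecutive gaps are $\asymp 1/n$, i.e. exponentially small in terms of the location) and the zeros of $\widehat f$ sit at $\pm n^\alpha$ (so consecutive gaps are $\asymp n^{\alpha-1}$, i.e. polynomially small in the location $|\xi|\asymp n^\alpha$). I would prove the two estimates in \eqref{eq:decay_logs} by a joint induction on the order of the derivative, the point being that at each stage the current decay on $\widehat f$ feeds into the Fourier-inversion inequality \eqref{eq:Fourier_inverstion_step} for $f$ and vice versa.

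First I would set up the zero sequences. For $f$: by (I), from the zeros $a_n=\log(n+1)$ one gets, for each $k$, a sequence $a_m^{(k)}$ of zeros of $f^{(k)}$ with $|a_{m+1}^{(k)}-a_m^{(k)}|\le |a_{m+k+1}-a_m| = \log\!\frac{m+k+2}{m+1}\le \log(k+2)\cdot(\text{something}) $; more usefully, once $a_m^{(k)}>\log(k+1)$, one has a crude bound $|a_{m+1}^{(k)}-a_m^{(k)}|\lesssim (k+1)e^{-a_m^{(k)}}$ of the type ``gap $\lesssim (k+1)e^{-|x|}$''. Then \eqref{eq:Fourier_inverstion_step} with this $k$ gives, for $x$ in the relevant interval, $|f^{(k)}(x)|\le (2\pi)^k I_{k+1}(\widehat f)\,|a_{m+1}^{(k)}-a_m^{(k)}| \lesssim (2\pi)^k (k+1) I_{k+1}(\widehat f) e^{-|x|}$. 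Iterating this: a pointwise bound $|f^{(k)}(x)|\lesssim \tau_k e^{-|x|}$ can be integrated against $|y|^{k-1}$ — using that $\widehat{f^{(k)}}$ is, up to constants, $\xi^k\widehat f$, hence controlling $I_k(\widehat f)$ is the same as controlling an $L^1$-type norm of $f^{(k)}$ — to pass from the bound at level $k$ to the bound at level $k+1$, accumulating the factorial factors $((k+1)!)^3$. The $\log(k+1)$ threshold in the statement is exactly where the interval $[a^{(k)}_{m+1},a^{(k)}_m]$ containing $x$ is guaranteed to be a ``genuine'' gap between consecutive derivative zeros rather than being spoiled by the $k$ nearby original zeros.

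Symmetrically, for $\widehat f$: the zeros of $\widehat f$ at $\pm n^\alpha$ give, via (I) applied to $\widehat f$, zeros $b_m^{(j)}$ of $[\widehat f]^{(j)}$ with gap $|b_{m+1}^{(j)}-b_m^{(j)}|\le |(m+j+1)^\alpha - m^\alpha|\lesssim \alpha (j+1) m^{\alpha-1}$, and since $|\xi|\asymp b_m^{(j)}\asymp m^\alpha$ this is $\lesssim \alpha(j+1)|\xi|^{(\alpha-1)/\alpha}$. Plugging into \eqref{eq:Fourier_inverstion_step} in the roles reversed (Fourier inversion for $\widehat f$ is just Fourier transform of $f$, up to a reflection) yields $|[\widehat f]^{(j)}(\xi)|\le (2\pi)^j I_{j+1}(f)\,|b_{m+1}^{(j)}-b_m^{(j)}|\lesssim (2^{2-\alpha}\pi)^{j+1}\alpha^j (j+1)! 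I_j(f)|\xi|^{j(\alpha-1)/\alpha}$, once one tracks how the constants compound through $j$ steps; here one also uses $|b^{(j)}_{m}|\ge (2j+1)^\alpha$ threshold so that the $j$ intermediate original zeros lie to the left and the gap bound $(m+j+1)^\alpha-m^\alpha$ is the correct one. I would organise the two families of estimates as a single induction, since the constant $\tau_k$ implicitly depends on $I_k(\widehat f)$ and one must check these quantities remain finite — which they do because $f$ is Schwartz and each improved pointwise bound only strengthens integrability.

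The main obstacle I anticipate is bookkeeping the constants: one has to show the factors accrued at each iteration step multiply out to exactly (or are dominated by) the claimed $k(2\pi)^k((k+1)!)^3$ and $(j+1)!(2^{2-\alpha}\pi)^{j+1}\alpha^j$, and in particular that passing from a pointwise bound $|f^{(k)}|\lesssim\tau_k e^{-|x|}$ to a bound on $I_{k+1}(\widehat f)$ (equivalently on $\int |f^{(k+1)}(y)||y|^{0}\,\mathrm dy$ and lower moments) does not lose more than a factorial. A secondary subtlety is the matching of the intervals: one must verify that the interval $[a^{(k)}_{m+1},a^{(k)}_m]$ produced by (I.iii) really covers all sufficiently large $x$ (no gaps left uncovered), which follows from $a^{(k)}_m\to\infty$ together with $a^{(k)}_{m+1}<a^{(k)}_m$, but should be stated carefully. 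Everything else — Morera, Fourier inversion, the mean value theorem chain — is exactly as prepared in Section \ref{sec:prelim}.
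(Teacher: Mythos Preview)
Your plan has the right ingredients but the iteration mechanism is wrong, and as written it will not produce the target exponents $e^{-k|x|}$ and $|\xi|^{j(\alpha-1)/\alpha}$.

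The first issue is the step where you write
\[
|[\widehat f]^{(j)}(\xi)|\le (2\pi)^j I_{j+1}(f)\,|b_{m+1}^{(j)}-b_m^{(j)}|\lesssim (2^{2-\alpha}\pi)^{j+1}\alpha^j (j+1)!\, I_j(f)\,|\xi|^{j(\alpha-1)/\alpha}.
\]
Inequality \eqref{eq:Fourier_inverstion_step} gives exactly \emph{one} factor of the gap, i.e.\ one power of $|\xi|^{(\alpha-1)/\alpha}$, on $[\widehat f]^{(j)}$; nothing in your argument explains where the remaining $j-1$ powers come from. The actual mechanism is a fundamental-theorem-of-calculus descent: from the single-power bound on $[\widehat f]^{(j)}$ one writes $[\widehat f]^{(j-1)}(\xi)=-\int_{\xi}^{b^{(j-1)}_{m}}[\widehat f]^{(j)}$, and since the interval of integration has length $\lesssim |\xi|^{(\alpha-1)/\alpha}$ one gains a second power on $[\widehat f]^{(j-1)}$. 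Repeating this $j$ times down to $\widehat f$ yields the $j$ powers, and the factorials $(j+1)!$ and $(2^{2-\alpha}\pi)^{j+1}\alpha^j$ are exactly the constants accumulated along this chain. The same descent on the $f$ side, with gaps $\le (k+1)^2 e^{-|x|}$, is what produces $e^{-k|x|}$ rather than just $e^{-|x|}$.

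The second issue is the proposed ``joint induction'' in which decay of $f$ feeds into $I_k(\widehat f)$ and vice versa. For this lemma no such coupling is used (or needed): the two estimates are proved independently, with $I_j(f)$ and $I_k(\widehat f)$ treated as fixed finite constants coming from $f\in\mathcal S(\R)$; the back-and-forth between $I_k(f)$ and $I_k(\widehat f)$ only enters later, in Lemma \ref{th:decay_I_k}. Moreover, your specific device---``a pointwise bound $|f^{(k)}|\lesssim\tau_k e^{-|x|}$ can be integrated \ldots\ controlling $I_k(\widehat f)$ is the same as controlling an $L^1$-type norm of $f^{(k)}$''---does not work: $I_k(\widehat f)=(2\pi)^{-k}\|\widehat{f^{(k)}}\|_{L^1}$, and neither pointwise nor $L^1$ control of $f^{(k)}$ bounds the $L^1$ norm of its Fourier transform. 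Drop the coupling, insert the FTC descent described above, and the proof goes through with the stated constants.
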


\begin{proof} We first prove the assertion about $\widehat{f},$ as it will be also of interest to Lemma \ref{th:lemma_decay1} in the next section. Let $\xi \geq 0$. First we consider $n$ such that $\xi \in [n^{\alpha},(n+1)^{\alpha}]$. 
This implies $n^{\alpha -1}\leq 2^{1-\alpha}\xi^{\frac{\alpha -1}{\alpha}}$. 
By inequality \eqref{eq:Fourier_inverstion_step}, we have
\begin{align}\label{eq:decay_function} \begin{split}
|\widehat{f}(\xi)|  & \le |(n+1)^{\alpha} - n^{\alpha}| I_1(f) \cr 
 & \le 2\pi \alpha n^{\alpha -1}I_1(f) \cr
 & \le 2^{2-\alpha}\pi \alpha x^{\frac{\alpha -1}{\alpha}}I_1(f).    
\end{split}
\end{align}
Now, by observation (I.i), as long as $\xi>(2j+1)^{\alpha}$, we can conclude there is $n \ge j$ such that $\xi \in [a_{n+1}^{(j)},a_{n}^{(j)}]\subset [n^{\alpha},(n+j+1)^{\alpha}]$. This means $n^{\alpha -1}\leq 2^{1-\alpha}\xi^{\frac{\alpha -1}{\alpha}}$, and therefore
\begin{align} \label{eq:decay_function_derivative}\begin{split}
|[\widehat{f}]^{(j)}(\xi)| & \le (2\pi)^j |a_{n+1}^{(j)}-a_{n}^{(j)}| I_{j+1}(f) \cr 
 & \le (2\pi)^{j+1}|(n+j+1)^{\alpha} - n^{\alpha}| I_{j+1}(f) \cr 
 & \le \alpha(j+2)(2\pi)^{j+1} n^{\alpha -1}I_{j+1}(f) \cr
 & \le 2^{1-\alpha} \alpha(j+2)(2\pi)^{j+1} \xi^{\frac{\alpha -1}{\alpha}}I_{j+1}(f).    
\end{split}
\end{align}
By the fundamental theorem of calculus and inequality \eqref{eq:decay_function_derivative} for $j=1$, we have 
\begin{align} \label{eq:FTC_step1}
\begin{split}
    |\widehat{f}(\xi)| &= |\widehat{f}((n+1)^\alpha)-\widehat{f}(\xi)| \\
 &=  \left|\int^{(n+1)^\alpha}_{\xi} [\widehat{f}]'(y) \, \mmd y \right| \cr 
 & \leq 3 \cdot 2^{1-\alpha} \cdot (2\pi)^2\alpha I_2(f)|(n+1)^{\alpha} - n^{\alpha}|\xi^{\frac{\alpha -1}{\alpha}} \\
 &\leq 3 \cdot 2 \cdot (2^{2-\alpha}\pi)^2\alpha^2 I_2(f)\xi^{2\left(\frac{\alpha -1}{\alpha}\right)}.
\end{split}
\end{align}
Inequality \eqref{eq:FTC_step1} exemplifies how one can use the concentration properties of the sequence $n^{\alpha}$ in order to obtain decay for $f$ and $\widehat{f}$. We can iterate these inequalities for higher order derivatives and obtain better decay. For instance, if we apply the same reasoning as in \eqref{eq:FTC_step1} for the first derivative, we obtain
\begin{align} \label{eq:FTC_step2}
\begin{split}
    |[\widehat{f}]'(\xi)| &= |[\widehat{f}]'(a_{n+1}^{(1)})-[\widehat{f}]'(\xi)| \\
 &=  \left|\int^{a_{n+1}^{(1)}}_{\xi} [\widehat{f}]''(y) \, \mmd y \right| \cr
 &\leq 4\cdot 3 \cdot (2^{2-\alpha}\pi)^3 \alpha^2 I_3(f)\xi^{2\left(\frac{\alpha -1}{\alpha}\right)}.
\end{split}
\end{align}
If we combine this new extra decay for $[\widehat{f}]'$ with the fundamental theorem of calculus, as in \eqref{eq:FTC_step1}, we obtain for $\xi>2$ that
\begin{align} \label{eq:FTC_step3}
\begin{split}
    |\widehat{f}(\xi)| &\leq 4 \cdot 3 \cdot 2 \cdot (2^{2-\alpha}\pi)^4 \alpha^3 I_3(f)\xi^{3\left(\frac{\alpha -1}{\alpha}\right)}.
\end{split}
\end{align}
By induction, one can iterate this process and obtain decay of the order of $\xi^{j\left(\frac{\alpha -1}{\alpha}\right)}$ for $\xi>(2j+1)^{\alpha}$, More precisely,
\begin{align} \label{eq:FTC_stepk}
\begin{split}
    |\widehat{f}(\xi)|  &\leq (j+1)!(2^{2-\alpha}\pi)^{j+1}\alpha^j I_{j}(f)\xi^{j\left(\frac{\alpha -1}{\alpha}\right)}.
\end{split}
\end{align}
Applying the same analysis for negative $\xi$ yields the desired result for $\widehat{f}.$ In order to obtain the asserted bound for $f,$ we run the same scheme of proof, paying attention to the fact that, if $\{b_m^{(k)}\}_{m \in \Z}$ denotes the sequence of zeros of $f,$
in the sense of Section \ref{sec:decay}, then $[b_m^{(k)},b_{m+1}^{(k)}] \subset [\log(m+1),\log(m+k+2)]$, and  
 $$|b_m^{(k)} - b_{m+1}^{(k)}| \le \log (1+\frac{k+1}{m+1}) \le \frac{k+1}{m+1} \le (k+1)^2 \cdot e^{-\log(m+k+1)}.$$
If $x \ge0 $ belongs to the interval $[b_m^{(k)},b_{m+1}^{(k)}],$ then the expression above is bounded by $(k+1)^2 e^{-x}.$ We leave out the details to the iteration procedure, for they 
essentially only replicate equations \eqref{eq:FTC_step1}--\eqref{eq:FTC_stepk}.
\end{proof}
We now describe, in a concise way, the iteration scheme to be undertaken. Since $f\in\mathcal{S}(\R)$, there is a constant $D>0$ such that
\begin{align*}
    |\widehat{f}(\xi)|\leq D.
\end{align*}
Hence
\begin{align*}
    \begin{split}
    I_k(\widehat{f})&\leq D\int_{|\xi|\leq (1+j)^{\alpha}}|\xi|^{k}\mmd \xi+\widehat{C_j}\int_{|\xi|\leq (1+j)^{\alpha}}|\xi|^{k+j\left(\frac{\alpha-1}{\alpha}\right)}\mmd \xi \cr
                    &\leq 2D\frac{1}{k+1}(1+j)^{\alpha(k+1)}+\widehat{C_j}\frac{1}{k+j\left(\frac{\alpha-1}{\alpha}\right)+1}(1+j)^{k+j\left(\frac{\alpha-1}{\alpha}\right)+1}, \cr
    \end{split}
\end{align*}
as long as we choose $j\geq\frac{(k+2)\alpha}{1-\alpha}$. Choosing $j=j(k)\sim \frac{(k+2)\alpha}{1-\alpha}$ implies
\begin{align}\label{eq:bounding_I_k_step_2}
    \begin{split}
    I_k(\widehat{f})&\leq 2D\frac{1}{k+1}(1+\frac{(k+2)\alpha}{1-\alpha})^{\alpha(k+1)}+\widehat{C_j}\frac{1}{2k+2}(1+\frac{(k+2)\alpha}{1-\alpha})^{-1} \\
                   &\leq A_\alpha\left(k^{\alpha(k+1)-1}+\widehat{C}_j\frac{1}{k^2}\right) = A_{\alpha} \left(k^{\alpha(k+1)-1}+(j+1)!(2^{2-\alpha}\pi)^{j+1} \alpha^j I_{j}(f)\frac{1}{k^2}\right).\
    \end{split}
\end{align}
We also observe that \eqref{eq:decay_logs} for $k=1$ implies 
\begin{equation}\label{eq:k=1}
I_j(f) \le \mathcal{C}(f) \int_{\R} e^{-|x|}|x|^j \, \mmd x \lesssim_f j!.
\end{equation}
Putting together \eqref{eq:bounding_I_k_step_2}, \eqref{eq:k=1} together with \eqref{eq:decay_logs}, we obtain that 
\begin{align}\label{eq:final_decay_f}
|f(x)| &\le k(2\pi)^k ((k+1)!)^3 I_k(\widehat{f}) e^{-k|x|}   \cr 
       &\le k(2\pi)^k ((k+1)!)^3 A_{\alpha} \left(k^{\alpha(k+1)-1}+(j+1)!(2\pi\alpha)^j I_{j}(f)\frac{1}{k^2}\right) e^{-k|x|} \cr 
       &\le e^{O(k \, \log k) - k|x|},\cr
\end{align}
for $|x| \ge \log(k+1),$ where by $O(k \, \log k)$ we denote an expression that is bounded by $C_{\alpha} k \log (k+1),$ for some constant depending on $\alpha.$ Equation \eqref{eq:final_decay_f} implies, as $k \le e^{|x|} - 1$ can be chosen 
arbitrarily, that for each $A \gg 1,$ there is $c_A > 0$ such that 
\begin{equation}\label{eq:arbitrary_decay}
|f(x)| \lesssim_{f,A} e^{-c_A |x|^A}.
\end{equation}

\subsection{Viewing $\widehat{f}$ as an entire function} The final part of the argument uses complex analysis to derive a contradiction. In fact, by Corollary \ref{th:analytic_order1}, $\widehat{f}$ is an entire function of order at most 1. The converse to Hadamard's factorisation theorem then predicts that the sum of inverses of zeros of $\widehat{f}$ raised to $1+\varepsilon$ should converge, no matter
which value of $\varepsilon>0$ we choose. But we know that $\{\pm n^{\alpha}\}_{n \ge 0}$ is contained in the set of zeros of $\widehat{f},$ therefore 
\[
\sum_{n \ge 0} \frac{1}{n^{(1+\varepsilon)\alpha}} < +\infty.
\]
This is a clear contradiction, as long as $\alpha <1.$ The contradiction came from assuming that $\widehat{f} \not\equiv 0,$ and thus we have proved the first part of Theorem \ref{th:mainthm}. 

\section{Proof of (B)}\label{sec:proof_ii} 

\subsection{Obtaining simultaneous decay}\label{sec:decay}
The first key step of the proof is obtaining enough decay on $\widehat{f}$ in order extend $f$ as an analytic function. One of the key estimate for that will be an iteration scheme of inequality \eqref{eq:Fourier_inverstion_step}, which is the content of the next Lemmas 
\smallskip
\begin{lemma}\label{th:lemma_decay1}
Let $f\in \mathcal{S}(\R)$ and assume that $f(\pm(n)^{\alpha})=0$ and $\widehat{f}(\pm n^{\beta})=0$ for every $n\in \N$, where $0<\alpha,\beta<1$. Then, for $|x|>(k+1)^{\alpha}$ and $|\xi|>(j+1)^{\beta}$, one has
\begin{align*}
        |f(x)|  &\leq (k+1)!(2^{2-\alpha}\pi)^{k+1} \alpha^k I_k(\widehat{f})|x|^{k\left(\frac{\alpha -1}{\alpha}\right)} =C_k|x|^{k\left(\frac{\alpha -1}{\alpha}\right)} \\
            |\widehat{f}(\xi)|  &\leq (j+1)!(2^{2-\beta}\pi)^{j+1} \beta^j I_{j}(f)|\xi|^{j\left(\frac{\beta -1}{\beta}\right)}=\widehat{C}_j|\xi|^{j\left(\frac{\beta -1}{\beta}\right)}.
\end{align*}
\end{lemma}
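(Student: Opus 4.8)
The plan is to derive both inequalities from the iteration already carried out inside the proof of Lemma \ref{th:lemma_decay_log}. The chain of estimates \eqref{eq:decay_function}--\eqref{eq:FTC_stepk} there proves the following self-contained statement: if $g \in \mathcal{S}(\R)$ satisfies $\widehat{g}(\pm n^{\gamma}) = 0$ for every $n \in \N$ and some $\gamma \in (0,1)$, then
\[
|\widehat{g}(\xi)| \le (j+1)!\,(2^{2-\gamma}\pi)^{j+1}\gamma^{j}\, I_j(g)\,|\xi|^{j(\gamma-1)/\gamma}
\]
for all $|\xi|$ larger than a threshold comparable to $j^{\gamma}$. Applying this with $g = f$ and $\gamma = \beta$ yields the asserted bound on $\widehat{f}$ verbatim. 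Applying it instead with $g = \widehat{f}$ and $\gamma = \alpha$ --- legitimate, since $\widehat{f} \in \mathcal{S}(\R)$ and $\widehat{\widehat{f}}(\pm n^{\alpha}) = f(\mp n^{\alpha}) = 0$ --- and noting that $\widehat{\widehat{f}}(x) = f(-x)$ while the moment appearing in the conclusion is $I_j(\widehat{f})$, yields, after renaming the variable, the asserted bound on $f$. Thus both halves of the lemma are immediate consequences of work already done, modulo a routine sharpening of the index arithmetic in \eqref{eq:decay_function_derivative} to match the precise thresholds $(k+1)^{\alpha}$ and $(j+1)^{\beta}$ stated.

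If a self-contained argument is preferred, I would simply rerun the scheme directly for $f$. First, introduce as in Section \ref{sec:zero_decay} the sequences $\{b_m^{(k)}\}$ of zeros of $f^{(k)}$; from $f(\pm n^{\alpha}) = 0$ and property (I.iii) one gets $[b_m^{(k)}, b_{m+1}^{(k)}] \subset [m^{\alpha}, (m+k+1)^{\alpha}]$ for $m$ large enough, which the hypothesis $|x| > (k+1)^{\alpha}$ guarantees. Then \eqref{eq:Fourier_inverstion_step} gives $|f^{(k)}(x)| \le (2\pi)^{k+1}\,|(m+k+1)^{\alpha} - m^{\alpha}|\,I_{k+1}(\widehat{f})$ for $x$ in such an interval; the elementary bound $(m+k+1)^{\alpha} - m^{\alpha} \le \alpha(k+1)m^{\alpha-1}$ (mean value theorem for $t \mapsto t^{\alpha}$, using $\alpha < 1$), combined with $x \le (m+k+1)^{\alpha} \le (2m)^{\alpha}$ and hence $m^{\alpha-1} \le 2^{1-\alpha}\,x^{(\alpha-1)/\alpha}$, produces a single power of the decaying factor $x^{(\alpha-1)/\alpha}$ per derivative. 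Finally, finitely many applications of the fundamental theorem of calculus --- integrating $f^{(m+1)}$ from $x$ to the nearest zero of $f^{(m)}$, which lies at distance $\lesssim_{\alpha} x^{(\alpha-1)/\alpha}$ by the same estimate --- convert this into $|f(x)| \le C_k\,|x|^{k(\alpha-1)/\alpha}$, exactly mirroring \eqref{eq:FTC_step1}--\eqref{eq:FTC_stepk}.

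The only point that genuinely requires care, and which I expect to be the main (and essentially only) obstacle, is the bookkeeping of the constants: one must verify that they telescope into the stated product $(k+1)!\,(2^{2-\alpha}\pi)^{k+1}\alpha^{k}$ and, in particular, that the moment index drops from $k+1$ in the derivative estimate to $k$ in the final bound on $f$, each application of the fundamental theorem of calculus lowering it by one unit. Since this verification is word for word the one already performed along \eqref{eq:decay_function}--\eqref{eq:FTC_stepk}, I would point to that computation rather than reproduce it here.
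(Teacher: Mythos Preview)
Your proposal is correct and matches the paper's own treatment: the paper states outright that ``the proof of this Lemma is identical to that of Lemma~\ref{th:lemma_decay_log}, and we therefore skip it,'' which is precisely your strategy of either invoking the chain \eqref{eq:decay_function}--\eqref{eq:FTC_stepk} as a black box (applied once to $g=f$ and once to $g=\widehat{f}$ via Fourier inversion) or rerunning it verbatim for $f$. Your observation about the mismatch in the thresholds --- $(2j+1)^{\beta}$ in Lemma~\ref{th:lemma_decay_log} versus $(j+1)^{\beta}$ here --- is well spotted, though the paper does not address it; the discrepancy is harmless for the downstream applications and can indeed be absorbed by the routine index bookkeeping you describe.
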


The proof of this Lemma is identical to that of Lemma \ref{th:lemma_decay_log}, and we therefore skip it. Lemma \ref{th:lemma_decay1} means that one can get very good decay for $f(x)$ for large values of $x$ by sacrificing the potentially big constant 
\begin{align*}
    C_k=(k+1)!(2^{2-\alpha}\pi)^{k+1}\alpha^k I_k(\widehat{f}) = B_kI_k(\widehat{f}).
\end{align*}
The number $B_k$ is easy to estimate by using Stirling's formula. Indeed
\begin{align}\label{eq:B_k_stirling}\begin{split}
B_k&\leq Ce^{-(k+1)+(k+3/2)\log(k+1)+k\log(2\pi\alpha) }   \\    
   &\leq c_\alpha e^{k\log k+(\log(2\pi\alpha)+1)k+\frac{3}{2}\log{k}}
\end{split}
\end{align}
Meanwhile, the number $I_k(\widehat{f})$, although finite, might grow at an undesirable rate. Our next step is to control the integral $I_k(f)$. 

\begin{lemma}\label{th:decay_I_k}
Let $f\in \mathcal{S}(\R)$ and assume that $f(\pm n^{\alpha})=0$ and $\widehat{f}(\pm n^{\beta})=0$ for every $n\in \N$, where $0<\alpha+\beta<1$. Then there
exists $\tau = \tau(\alpha,\beta) > 0$ such that 
$$I_k(f) \lesssim_{f,\alpha,\beta} e^{-\tau k \log k + O(k)}.$$ 
\end{lemma}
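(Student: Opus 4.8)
The plan is to prove the estimate by a bootstrap in which the two halves of Lemma~\ref{th:lemma_decay1} are fed into one another. Recall from that lemma that for $|x|>(k+1)^{\alpha}$ one has $|f(x)|\le B_k\,I_k(\widehat f)\,|x|^{k(\alpha-1)/\alpha}$, where $B_k=(k+1)!(2^{2-\alpha}\pi)^{k+1}\alpha^k\le e^{k\log k+O(k)}$ by Stirling's formula (cf.\ \eqref{eq:B_k_stirling}), and, by the symmetric statement with $(\alpha,f)$ and $(\beta,\widehat f)$ interchanged, for $|\xi|>(j+1)^{\beta}$ one has $|\widehat f(\xi)|\le \widehat B_j\,I_j(f)\,|\xi|^{j(\beta-1)/\beta}$ with $\widehat B_j\le e^{j\log j+O(j)}$. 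The first step is to convert these pointwise decay bounds into bounds directly relating the moments $I_k(f)$ and $I_k(\widehat f)$ at dilated indices. Splitting
\[
I_k(\widehat f)=\int_{|\xi|\le (j+1)^{\beta}}|\widehat f(\xi)|\,|\xi|^{k}\,\mmd\xi+\int_{|\xi|>(j+1)^{\beta}}|\widehat f(\xi)|\,|\xi|^{k}\,\mmd\xi,
\]
we bound $|\widehat f|$ by $\|\widehat f\|_{\infty}$ on the first (bounded) piece, producing a contribution of size $\|\widehat f\|_\infty(j+1)^{\beta(k+1)}/(k+1)$; on the second piece we use the polynomial tail, which gives a factor $\widehat B_j\,I_j(f)$ times $\int_{(j+1)^{\beta}}^{\infty}\xi^{\,k-j(1-\beta)/\beta}\,\mmd\xi$. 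Convergence of this last integral forces the choice $j=j(k)$ with $j(k)\sim\tfrac{\beta}{1-\beta}(k+2)$, and with that choice the tail integral is $O(1)$ while $\log\big[(j+1)^{\beta(k+1)}\big]$ and $\log\widehat B_{j(k)}$ are both $O_{\beta}(k\log k)$. Hence one obtains an inequality of the shape $I_k(\widehat f)\le e^{O_{\beta}(k\log k)}\bigl(1+I_{j(k)}(f)\bigr)$ and, symmetrically, $I_k(f)\le e^{O_{\alpha}(k\log k)}\bigl(1+I_{m(k)}(\widehat f)\bigr)$ with $m(k)\sim\tfrac{\alpha}{1-\alpha}(k+2)$; the leftover near-origin part $\int_{|x|\le1}|f(x)||x|^{k}\,\mmd x$ is absorbed into the $O_f(1)$ constant since $f\in L^{1}$, and negative arguments are treated symmetrically, exactly as in Lemma~\ref{th:lemma_decay_log}.

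Composing the two estimates yields a single self-referential inequality
\[
I_k(f)\le e^{O_{\alpha,\beta}(k\log k)}\bigl(1+I_{\rho k}(f)\bigr),\qquad \rho=\frac{\alpha\beta}{(1-\alpha)(1-\beta)},
\]
and this is exactly where the hypothesis $\alpha+\beta<1$ enters decisively: it is equivalent to $\rho<1$, so one double application of the estimate strictly contracts the index. Iterating this $N(k)\sim\log_{1/\rho}k$ times, after which the index has dropped below $1$ and $I_t(f)\le\|f\|_{1}+I_1(f)=O_f(1)$, and summing the resulting geometric series of $k\log k$-scale contributions (convergent precisely because $\rho<1$), one arrives at a bound of the form of the statement, $I_k(f)\lesssim_{f,\alpha,\beta}e^{-\tau k\log k+O(k)}$ with $\tau=\tau(\alpha,\beta)>0$. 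A single pass through the iteration already gives such a bound with \emph{some} admissible $\tau$; the optimal value is obtained by a second round, in which the resulting stretched‑exponential pointwise control on $f$ --- read off from $|f(x)|\le B_k\,I_k(\widehat f)\,|x|^{k(\alpha-1)/\alpha}$ by optimising over $k$ --- is fed back into Lemma~\ref{th:lemma_decay1} to sharpen the moment estimate. This second-level iteration is a linear contraction with multipliers $\pm\sqrt{\rho}$, hence converges, and its fixed point is governed by the quantity $\tfrac{\alpha}{1-\alpha-\beta}$ --- the same quantity that appears in the description of the region $A$ in Theorem~\ref{th:mainthm}, and whose reciprocal will be the stretched-exponential exponent for $f$ used in the subsequent Hadamard-type argument.

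The substantive difficulty is not a single clever trick but the bookkeeping: one must choose the dilated indices $j(k)$, $m(k)$ sharply enough that none of the $O(k)$-scale contributions (the Stirling factors $B_k,\widehat B_j$, the lengths of the splitting intervals, the tail-integral constants) ever contaminate the $k\log k$-scale quantity being tracked through the logarithmically many iterates, and one must carry out the passage between ``moment growth on the $k\log k$ scale'' and ``pointwise stretched-exponential decay with a definite exponent'' with matching constants, since it is exactly there that the precise $\tau$ --- and hence the barrier cutting out the admissible region of exponents in part (B) of Theorem~\ref{th:mainthm} --- is determined. Everything else (Stirling's formula, the mean value theorem applied to the zeros of the derivatives, and Fourier inversion) is routine and parallels the corresponding steps in Lemma~\ref{th:lemma_decay_log}.
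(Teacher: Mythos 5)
Your argument is essentially the paper's own proof: the same splitting of the moment integrals with dilated indices $j(k)\sim\frac{\beta}{1-\beta}k$, $m(k)\sim\frac{\alpha}{1-\alpha}k$, the same composed self-referential inequality with contraction ratio $\gamma=\frac{\alpha\beta}{(1-\alpha)(1-\beta)}<1\Leftrightarrow\alpha+\beta<1$, and the same $\sim\log k$-fold iteration with geometric summation of the $k\log k$-scale exponents. One caveat: what this iteration actually produces (in the paper as well) is the \emph{growth} bound $I_k(f)\le e^{+\tau k\log k+O(k)}$ with $\tau=\frac{\lambda+\delta}{1-\gamma}$ --- the minus sign in the lemma's statement is a typo, so you should not claim decay of $I_k(f)$ --- and your closing ``second round'' sharpening of the exponent is not part of this lemma but of the subsequent optimization in Section \ref{sec:optimal_exp}.
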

\begin{proof}
From previous considerations, we know that it holds that
\begin{align}\label{eq:bounding_I_k_step_2'}
    \begin{split}
    I_k(\widehat{f})&\leq 2D\frac{1}{k+1}(1+\frac{(k+2)\beta}{1-\beta})^{\beta(k+1)}+\widehat{C_j}\frac{1}{2k+2}(1+\frac{(k+2)\beta}{1-\beta})^{-1} \\
                   &\leq A_\beta\left(k^{\beta(k+1)-1}+\widehat{C}_j\frac{1}{k^2}\right),
    \end{split}
\end{align}
where $|\widehat{f}| \le D$ pointwise. We can now apply the same inequality to $I_k(f)$, and obtain
\begin{align}\label{eq:bounding_I_k_step_3}
    \begin{split}
    I_k(f) &\leq A_\alpha\left(k^{\alpha(k+1)-1}+C_{\widehat{j}}\frac{1}{k^2}\right),
    \end{split}
\end{align}
where $\widehat{j}=\widehat{j}(k)\sim \frac{(k+2)\alpha}{1-\alpha}$. 
Keeping in mind that 
\begin{align*}
    C_k&=(k+1)!(2^{2-\alpha}\pi)^{k+1}\alpha^k  I_k(\widehat{f}) = B_kI_k(\widehat{f})\\
    \widehat{C}_j&=(j+1)!(2^{2-\beta}\pi)^{j+1}\beta^j I_j(f) = \widehat{B}_jI_j(f),
\end{align*}
one can iterate inequalities \eqref{eq:bounding_I_k_step_2} and \eqref{eq:bounding_I_k_step_3}. This means 
\begin{align*}
    \begin{split}
    I_k(f) &\leq A_\alpha\left(k^{\alpha(k+1)-1}+{B}_{\widehat{j}(k)}\frac{1}{k^2}I_{\widehat{j}(k)}(\widehat{f})\right) \\
    &\leq A_\alpha\left(k^{\alpha(k+1)-1}+{B}_{\widehat{j}(k)}\frac{1}{k^2}A_\beta\left({\widehat{j}(k)}^{\beta(\widehat{j}(k)+1)-1}+\widehat{C}_{j(\widehat{j}(k))}\frac{1}{(\widehat{j}(k))^2}\right)\right) \\
    &= A_\alpha\left(k^{\alpha(k+1)-1}+{B}_{\widehat{j}(k)}\frac{1}{k^2}A_\beta\left({\widehat{j}(k)}^{\beta(\widehat{j}(k)+1)-1}+\frac{\widehat{B}_{j(\widehat{j}(k))}}{(\widehat{j}(k))^2}I_{j(\widehat{j}(k))}(f)\right)\right).
    \end{split}
\end{align*}
This chain of inequalities amounts to the following inequality
\begin{align}\label{eq:bounding_I_k_step_4}
    \begin{split}
    I_k(f) &\leq G(k)+H(k)I_{j(\widehat{j}(k))}(f),
    \end{split}
\end{align}
where
\begin{align}\label{eq:G_H_order}\begin{split}
    G(k)&=A_{\alpha,\beta}(k^{\alpha(k+1)-1}+{B}_{\widehat{j}(k)}\frac{1}{k^2}{\widehat{j}(k)}^{\beta(\widehat{j}(k)+1)-1}) \\
    H(k)&=A_{\alpha,\beta}\frac{{B}_{\widehat{j}(k)}\widehat{B}_{j(\widehat{j}(k))}}{k^2\widehat{j}(k)^2}.
    \end{split}
\end{align}
An observation in order is that
\begin{align*}
    \rho(k)=j(\widehat{j}(k))\sim \left(\tfrac{\alpha}{1-\alpha}\right)\left(\tfrac{\beta}{1-\beta}\right)k,
\end{align*}
and 
\begin{align*}
    \gamma=\left(\tfrac{\alpha}{1-\alpha}\right)\left(\tfrac{\beta}{1-\beta}\right)<1 \, \Leftrightarrow \, \alpha + \beta <1.
\end{align*}
Since we assumed that $\alpha+\beta < 1$, it implies $\gamma <1$ and inequality \eqref{eq:bounding_I_k_step_4} roughly translates
\begin{align}\label{eq:bounding_I_k_step_5}
    \begin{split}
    I_k(f) &\leq G(k)+H(k)I_{\gamma k}(f),
    \end{split}
\end{align}
and by iterating one gets
\begin{align}\label{eq:bounding_I_k_step_6}
    \begin{split}
    I_k(f)   &\leq\sum_{l=0}^{m-1}\left[G(\gamma^lk)\prod_{s=0}^{l-1}H(\gamma^{s}k)\right] +H(\gamma^{m-1}k)\cdots H(\gamma k)H(k)I_{\gamma^m k}(f).
    \end{split}
\end{align}
In order for our bounds to behave nicely, we assume at this point that $A_{\alpha,\beta}=1$ in \eqref{eq:G_H_order}, which is possible simply by dividing $f$ by $A_{\alpha,\beta}$ at the cost of an extra constant depending only on $\alpha$ and $\beta$ on the desired bounds. 
We estimate $G$ using \eqref{eq:B_k_stirling}
\begin{align*}
    G(k)&\lesssim_{\alpha} e^{\alpha(k+1)\log{k}}+  e^{(1+\beta)\frac{\alpha}{1-\alpha}k\log k+O(k)}\cr
        &\leq e^{\lambda k\log k+E(k)},
\end{align*}
where 
\begin{align*}
    \lambda=(1+\beta)\frac{\alpha}{1-\alpha},
\end{align*}
and $E(k)=O(k)$. Now we estimate $H$ in the same fashion
\begin{align*}
    H(k)&=\frac{{B}_{\widehat{j}(k)}\widehat{B}_{j(\widehat{j}(k))}}{k^2\widehat{j}(k)^2} \\
        &\lesssim_{\alpha} e^{(\frac{\alpha}{1-\alpha})k\log{k}+O(k)}e^{\gamma k\log{k}+O(k)} \\
        &\leq e^{\delta k\log{k}+E(k)},
\end{align*}
where 
\begin{align*}
    \delta=\frac{\alpha}{1-\alpha}+\gamma = \frac{\alpha}{(1-\alpha)(1-\beta)},
\end{align*}
and $F(k)=O(k)$. This means
\begin{align*}
    \prod_{s=0}^{l-1}H(\gamma^{s-1}k)&\leq e^{\sum_{s=1}^{l-1}[\delta \gamma^sk\log{\gamma^sk}+E(\gamma^sk)]} \\
    & \leq e^{\delta\frac{1-\gamma^{l}}{1-\gamma}k\log{k}+E_0(k)}
\end{align*}
and therefore
\begin{align}\label{eq:bounding_I_k_step_7}
    \begin{split}
    I_k(f) &\leq\left[\sum_{l=0}^{m-1}G(\gamma^lk)\prod_{s=0}^{l-1}H(\gamma^{s}k)\right] +H(\gamma^{m-1}k)\cdots H(\gamma k)H(k)I_{\gamma^m k}(f) \\
    &\leq \left[\sum_{l=0}^{m-1}   e^{\lambda {\gamma^{l}k}\log k+F(\gamma^{l}k)} e^{\delta\frac{1-\gamma^{l}}{1-\gamma}k\log{k}+E_0(k)}\right] +e^{\delta\frac{1-\gamma^{m}}{1-\gamma}k\log{k}+E_0(k)}I_{\gamma^m k}(f) \\
    &\leq m   e^{(\lambda+\delta)\frac{1}{1-\gamma}k\log k+F_0(k)}    +e^{\delta\frac{1}{1-\gamma}k\log{k}+E_0(k)}I_{\gamma^m k}(f).
    \end{split}
\end{align}
Now, if we choose $m\sim -\log_\gamma{k}$, and for simplicity assume $I_1(f)=1$, we have \footnote{Up to this point, we have neglected the error terms ($E$, $F$ etc), but their sums with argument $\gamma^l k$ are clearly still going to be $O(k)$.}
\begin{align*}
    I_k(f)\leq e^{\frac{\lambda+\delta}{1-\gamma}k\log{k} +O(k)}.
\end{align*}
The proof of the Lemma is then complete by taking $\tau = \frac{\lambda + \delta}{1-\gamma}.$ This choice is going to be important for us later on.
\end{proof} 

One direct consequence of Lemma \ref{th:decay_I_k} is that we obtain an explicit decay for $\widehat{f}$ of the form 
\begin{align}\label{eq:decay_Fourier2}
\begin{split}
    |\widehat{f}(\xi)|&\leq e^{(1+\frac{\lambda+\delta}{1-\gamma})k\log{k}+O(k)}|\xi|^{k\left(\frac{\beta -1}{\beta}\right)} \\
    &=e^{(1+\frac{\lambda+\delta}{1-\gamma})k\log{k}+\left(\frac{\beta -1}{\beta}\right)k\log{|\xi|}+O(k)},
\end{split}
\end{align}
whenever $(1+2k)^{\beta}\leq |\xi|$. Now, if one chooses $k\sim|\xi|^\frac{1}{\epsilon}$, the exponent in \eqref{eq:decay_Fourier2} becomes
\begin{align*}
    \left[\frac{1}{\epsilon}\left(1+\frac{\lambda+\delta}{1-\gamma}\right)\log{|\xi|}+\left(\frac{\beta -1}{\beta}\right)\log{|\xi|}\right]|\xi|^{\frac{1}{\epsilon}}+O(|\xi|^{\frac{1}{\epsilon}}).
\end{align*}
As long as 
\begin{align*}
    \frac{1}{\epsilon}\left(1+\frac{\lambda+\delta}{1-\gamma}\right)<\frac{1 -\beta}{\beta},
\end{align*}
or equivalently
\begin{align}\label{eq:size_condition}\begin{split}
    \epsilon&>\left(1+\frac{\lambda+\delta}{1-\gamma}\right)\frac{\beta}{1-\beta}\\
    &=\frac{1-\alpha - \beta + (2-\beta^2)\alpha}{1-\alpha - \beta}\frac{\beta}{1-\beta}\\
    &=\frac{1+\alpha - \beta(1+\alpha \beta)}{1-\alpha-\beta} \cdot \frac{\beta}{1-\beta},
\end{split}
\end{align}
we can conclude that, for some $0<\theta<1$,
\begin{align}\label{eq:first_exponential_decay}
    |\widehat{f}(\xi)|&\lesssim_{f} e^{-(1-\theta)|\xi|^{\frac{1}{\epsilon}}}, 
\end{align}
where $\theta>0$ is small, and \eqref{eq:size_condition} is obviously true for some admissible large $\epsilon$, i.e, some number such that $(1+2k)^\beta<|\xi|\sim k^{\epsilon}$. We next connect this exponential decay we have
achieved to the magnitude of $I_k(f).$ 

\begin{lemma}\label{th:I_k_improvement} Let $f \in \mathcal{S}(\R)$ such that 
\begin{equation}\label{eq:exponential_decay_f} 
|f(x)| \le C_f e^{-(1-\theta)|x|^{\frac{1}{\delta}}}.
\end{equation}
Then it holds that $I_k(f) \lesssim_{f,\delta,\theta} \Gamma(\delta (k+1)).$ 
\end{lemma}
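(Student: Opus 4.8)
The plan is to directly estimate the integral $I_k(f) = \int_{\R} |f(y)| |y|^k \, \mmd y$ by inserting the hypothesised super-exponential decay \eqref{eq:exponential_decay_f} and reducing to a Gamma integral by a change of variables. First I would write
\[
I_k(f) \le C_f \int_{\R} e^{-(1-\theta)|y|^{\frac{1}{\delta}}} |y|^k \, \mmd y = 2 C_f \int_0^{\infty} e^{-(1-\theta) y^{\frac{1}{\delta}}} y^k \, \mmd y,
\]
using that the integrand is even. Then I would substitute $u = (1-\theta) y^{\frac{1}{\delta}}$, so that $y = \big(\tfrac{u}{1-\theta}\big)^{\delta}$ and $\mmd y = \delta (1-\theta)^{-\delta} u^{\delta - 1} \, \mmd u$; this turns the integral into
\[
2 C_f \, \delta \, (1-\theta)^{-\delta(k+1)} \int_0^\infty e^{-u} u^{\delta(k+1) - 1} \, \mmd u = 2 C_f \, \delta \, (1-\theta)^{-\delta(k+1)} \, \Gamma(\delta(k+1)).
\]

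The remaining point is to absorb the factor $(1-\theta)^{-\delta(k+1)}$ into the implied constant in a way consistent with the claimed bound $I_k(f) \lesssim_{f,\delta,\theta} \Gamma(\delta(k+1))$. Since $0 < \theta < 1$, the base $(1-\theta)^{-\delta}$ is a fixed constant $> 1$ depending only on $\delta$ and $\theta$, so $(1-\theta)^{-\delta(k+1)}$ is genuinely exponential in $k$ and is \emph{not} bounded by an absolute constant. This is the only real subtlety, and the way to handle it is to observe that $\Gamma(\delta(k+1))$ grows like $k^{\delta k}$ (up to exponential factors, by Stirling), which dominates any purely exponential factor $c^k$; more precisely, for each fixed $c > 1$ one has $c^{k} \Gamma(\delta(k+1)) \lesssim_{c,\delta} \Gamma(\delta(k+1) + k_0)$ for a suitable shift, or one simply notes that the statement as used downstream only needs $I_k(f) \le C^k \Gamma(\delta(k+1))$ for some constant $C = C(\delta,\theta)$, which is exactly what the computation above gives with $C = (1-\theta)^{-\delta}$. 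I would therefore phrase the conclusion as $I_k(f) \le C_{f,\delta,\theta}^{\,k+1} \, \Gamma(\delta(k+1))$, matching how the bound is invoked later; if the paper insists on the cleaner form $\lesssim_{f,\delta,\theta} \Gamma(\delta(k+1))$, one absorbs the exponential via the elementary inequality $c^{k} \le \Gamma(k+1)^{\epsilon} \cdot C_{c,\epsilon}$ together with $\Gamma(k+1)^\epsilon \Gamma(\delta(k+1)) \lesssim \Gamma((\delta+\epsilon')(k+1))$ and a harmless enlargement of $\delta$.

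The main obstacle, then, is not analytic but bookkeeping: making sure the exponential-in-$k$ prefactor produced by the scaling is tracked honestly and shown to be negligible against the factorial growth of $\Gamma(\delta(k+1))$. Everything else — evenness, the single change of variables, recognising the Euler integral — is routine. I would keep the write-up to the three displayed lines above plus one sentence explaining the prefactor.
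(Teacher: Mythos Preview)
Your proposal is correct and follows essentially the same approach as the paper: insert the decay hypothesis, reduce to a one-dimensional integral by evenness, and perform the single change of variables $u=(1-\theta)y^{1/\delta}$ to land on the Euler integral for $\Gamma(\delta(k+1))$. The paper's proof is exactly these two steps and simply asserts ``which directly implies the assertion of the Lemma'' without discussing the prefactor; your extra paragraph on the exponential-in-$k$ factor $(1-\theta)^{-\delta(k+1)}$ is more careful than the paper, and your reading of the downstream usage is right --- the bound is only ever fed into an estimate of the form $e^{\delta k\log k + O(k)}$, so an $O(k)$ term in the exponent is harmless.
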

\begin{proof} By \eqref{eq:exponential_decay_f}, it follows that
\begin{align*}
    I_k(f)\lesssim \int_\R e^{-(1-\theta)|x|^{\frac{1}{\delta}}}|x|^{k}\mmd x. 
\end{align*}
By the change variables $x\leadsto \frac{t^\varepsilon}{(1-\theta)^\varepsilon}$, we have
\begin{align*}
    \begin{split}
        \int_\R e^{-|x|^{\frac{1}{\varepsilon}}}|x|^{k}\mmd x =\frac{2\varepsilon}{(1-\theta)^{k(\varepsilon+1)}}\int_0^\infty e^{-t}t^{\varepsilon(k+1)-1}\mmd t =\frac{2\varepsilon}{(1-\theta)^{k(\varepsilon+1)}}\Gamma(\varepsilon(k+1)),
    \end{split}
\end{align*}
which directly implies the assertion of the Lemma.
\end{proof} 

\subsection{Optimizing the exponent}\label{sec:optimal_exp} It is important to point out that up to this point the only imposed condition on the pair $(\alpha,\beta)$ is that $\alpha+\beta<1$. This means that whenever $f$ is a Schwartz function such that $f(\pm n^{\alpha})=0$ and $\widehat{f}(\pm n^{\beta})=0$, then inequality  \eqref{eq:first_exponential_decay} holds for some small $\theta$ and $\varepsilon$ satisfying \eqref{eq:size_condition}. 
We now describe an iteration procedure to improve the decay obtained in the previous subsection, at the cost of extra constraints on the pair $(\alpha,\beta)$.

\smallskip

Let $\epsilon(\widehat{f})$ denote the infimum of all $\epsilon >0$ obtained previously, such that \eqref{eq:first_exponential_decay} holds. That is, 
we let 
\[
\epsilon(\widehat{f}) = \frac{\beta(1+\frac{\lambda+\delta}{1-\gamma})}{1-\beta}.
\]
Define $\epsilon(f)$ in the same fashion, exchanging the roles of $\alpha$ and $\beta.$ The process that follows is a way to progressively decrease the magnitude of either $\epsilon(f)$ or $\epsilon(\widehat{f}).$

\smallskip

It follows from Lemma \ref{th:I_k_improvement} that
\begin{align*}
       |\widehat{f}(x)|  \leq e^{(1+\epsilon(f))k\log{k}+\left(\frac{\beta -1}{\beta}\right)k\log{|\xi|}+O(k)}.
\end{align*}
Define then the sequences $(a_n,b_n)_{n \in \Z}$ of exponents associated to $f,\widehat{f}$ to be 
\begin{align}\label{eq:recursion}
b_0 = \epsilon(\widehat{f}), & \, a_0 = \epsilon(f), \cr 
b_{n} = (1 + a_{n})\frac{\beta}{1-\beta}, \, & a_{n+1} = (1 + b_n)\frac{\alpha}{1-\alpha}. \cr
\end{align}
Repeating the argument undertaken in Section \ref{sec:decay}, it holds that, for any $\varepsilon > 0,$
\begin{align*}
|f(x)| & \lesssim_{f,\varepsilon} e^{-C_n|x|^{\frac{1}{a_n+\varepsilon}}}, \cr 
|\widehat{f}(\xi)| & \lesssim_{f,\varepsilon} e^{-\tilde{C}_n|\xi|^{\frac{1}{b_n+\varepsilon}}}, \cr 
\end{align*}
as long as the conditions $b_n > \beta$ and $a_n > \alpha$ are met for all $n \ge 0.$ We let, respectively to the definitions above, 
\begin{align*}
\theta_1(\alpha,\beta) & = \frac{\alpha}{(1-\alpha)(1-\beta)}, \cr
\theta_2(\alpha,\beta) & = \frac{\beta}{(1-\alpha)(1-\beta)}. 
\end{align*}
A computation shows that we actually have 
\begin{align}\label{eq:recursion_2}
a_{n+1} = \theta_1 + \gamma a_n, \cr
b_{n+1} = \theta_2 + \gamma b_n. 
\end{align}
As $\gamma < 1,$ we see that both $(a_n)_{n \ge 0}$ and $(b_n)_{n \ge 0}$ are convergent sequences, with limit 
\begin{align*}
L_1(\alpha,\beta) = \lim_{n \to \infty} a_n = \frac{\alpha}{1-\alpha-\beta}, \cr 
L_2(\alpha,\beta) = \lim_{n \to \infty} b_n = \frac{\beta}{1-\alpha - \beta}. \cr 
\end{align*}
This implies that, for all $\varepsilon > 0,$ 
\begin{align}\label{eq:best_exponential_decay}
|f(x)| & \lesssim_{f,\varepsilon} e^{-C|x|^{\frac{1}{L_1(\alpha,\beta)+\varepsilon}}}, \cr 
|\widehat{f}(\xi)| & \lesssim_{f,\varepsilon} e^{-\tilde{C}|\xi|^{\frac{1}{L_2(\alpha,\beta)+\varepsilon}}}. \cr 
\end{align}
Notice that, if $\epsilon(f) > L_1(\alpha,\beta)$ and $\epsilon(\widehat{f}) > L_2(\alpha,\beta),$ then both sequences $a_n,b_n$ are \emph{decreasing}, and \eqref{eq:best_exponential_decay} is
the best exponential decay we could expect for $f, \widehat{f}.$ Notice that the condition \eqref{eq:size_condition} gives us that $\epsilon(\widehat{f}) > L_2(\alpha,\beta)$ as desired, which 
proves that the iteration scheme presented achieves, in fact, a better exponential decay for $f, \widehat{f}$ than the original one. 

\begin{rmk} If we let $S^{\nu}_{\mu}(\R)$ denote the \emph{Gelfand-Shilov space} of Schwartz functions $\varphi$ such that 
\[
\sup_{x \in \R} |\varphi(x) e^{h|x|^{1/\nu}}|,\,\,\, \sup_{\xi \in \R} |\widehat{\varphi}(\xi) e^{k|\xi|^{1/\nu}}| < + \infty
\]
for some $k,h>0,$ then we have actually proved that $f \in \tilde{S}^{\nu}_{\mu}(\R) := \cup_{\nu_0 > \nu,\mu_0>\mu} S^{\nu_0}_{\mu_0}(\R),$ where $\nu = L_1(\alpha,\beta)$ and $\mu = L_2(\alpha,\beta).$ These function spaces are originally defined through specific decay 
properties of the Schwartz seminorms $\varphi \mapsto \|x^{\alpha} \partial^{\beta}\varphi\|_{\infty},$ and the equivalence to the higher-order decay statement above is proved through the seminorm decay. This 
procedure is in many ways analogous to the one undertaken here to obtain that $f \in S^{\nu}_{\mu}(\R),$ and the relationship between our proof and these function spaces was recently brought to our attention. For 
more information on Gelfand-Shilov spaces, see, for instance, \cite{CCK,GS1968} and the references therein. 
\end{rmk}

\subsection{Analytic continuation}\label{sec:analytic_continuation2} We wish to derive a contradiction from the fact that $f \not\equiv 0.$ In order to do it, we prove that either $f$ or $\widehat{f}$ can be analytically extended 
with control on its order depending only on $\min\{L_1(\alpha,\beta),L_2(\alpha,\beta)\}.$ Without loss of generality, let $\alpha \le \beta.$ Therefore, $L_1(\alpha,\beta) < L_2(\alpha,\beta)$  and, 
in case $\beta \le 1-2\alpha,$ then $L_1(\alpha,\beta) < 1,$ and this contains the region $A$ described in the introduction. We then appeal to Lemma \ref{th:analytic_continuation}, which enables us to 
conclude that $\widehat{f}$ is extendable as an analytic function of order at most 
$$\frac{1}{1-L_1(\alpha,\beta)}.$$ 
By the converse to Hadamard's factorisation theorem, we must have 
\begin{align*}
\sum_{n \ge 0} n^{-\frac{\beta+\varepsilon}{1-L_1(\alpha,\beta)}} < + \infty,
\end{align*}
for each $\varepsilon > 0.$ Thus, we reach an immediate contradiction if 
\[
\beta < 1- L_1(\alpha,\beta).
\]
As we supposed initially that $\alpha \le \beta,$ elementary calculations lead to the following observation: if $(\alpha,\beta) \in A,$ then 
each Schwartz function $f$ such that $f(\pm n^{\alpha}) = \widehat{f}(\pm n^{\beta}) = 0, \, \forall n \in \mathbb{N},$ then $f \equiv 0.$ This finishes the proof of 
Theorem \ref{th:mainthm}.



\section{Remarks and complements}\label{sec:comments}


\subsection{Spacing between zeros and bounds for $f$} 

In Sections \ref{sec:prelim}, \ref{sec:proof_i} and \ref{sec:proof_ii}, we have seen how to obtain decay for a Schwartz function 
given we have information on the location of the zeros of its derivatives. A main feature, in particular, of the proof in Section \ref{sec:proof_ii} was that the sequence of 
zeros of the derivative $f^{(k)}$ satisfies $a^{(k)}_n \in [n^{\alpha},(n+k+1)^{\alpha}],$ which enables us to bound 
\begin{equation}\label{eq:gap1}
|a^{(k)}_{n+1} - a^{(k)}_n| \le C_{\alpha} (k+1) |a^{(k)}_{n+1}|^{-\frac{1-\alpha}{\alpha}},
\end{equation}
if $n > k+1.$ A careful look into the proofs undertaken above relates the exponent of $k$ on the left hand side above to the iteration scheme for optimizing the exponent performed in Section \ref{sec:optimal_exp}. 
Indeed, if we were able to improve the factor on the right hand side of \eqref{eq:gap1} from $(k+1)$ to $(k+1)^{\omega}, \omega <1,$ then the sequences $a_n,b_n$ above would take the form 
\begin{align}\label{eq:recursion2}
b_0 = \epsilon(\widehat{f}), & \, a_0 = \epsilon(f), \cr 
b_{n} = (\omega + a_{n})\frac{\beta}{1-\beta}, \, & a_{n+1} = (\omega + b_n)\frac{\alpha}{1-\alpha}. \cr
\end{align}
A simple computation shows that the limit of this new sequences is strictly \emph{smaller} than the one we obtained in Section \ref{sec:optimal_exp}. This yields, as a consequence, an improvement 
on the set $A$ of admissible exponents for Theorem \ref{th:mainthm}, described in the introduction. For instance, if \eqref{eq:recursion2} holds, then 
\begin{align*}
\lim_{n \to \infty} a_n = \frac{\omega\alpha(1+(\omega-1)\beta)}{1-\alpha-\beta}, \cr 
\lim_{n \to \infty} b_n = \frac{\omega\beta(1+(\omega-1)\alpha)}{1-\alpha - \beta}. \cr 
\end{align*}
If $\alpha \le \beta$ and $\omega$ satisfies the equation 
\[
\omega(1+(\omega-1)\beta) = 1- \alpha - \beta,
\]
then the argument in Section \ref{sec:analytic_continuation2} produces a contradiction whenever $\alpha + \beta < 1,$ which would be the biggest regime in which one expects a version 
of our main theorem to hold. This raises the question whether the decay in \eqref{eq:gap1} can be improved. Unfortunately, the answer to this question is negative. Indeed, let $a^{(0)}_n = n^{\alpha}$ as before. 
Consider $\{ n \in \N \colon n^\alpha \in [2^j,2^{j+1})\}=[n_j,n_{j+1}),$ and define the sequence $\{a^{(k)}_n\},$ for $n \in [n_j,n_{j+1}-k)$ and $\frac{1}{j+1}2^{k/\alpha} < k <  2^{j/\alpha},$ satisfying 
\begin{align}\label{eq:sequence_old}
 & a^{(k-1)}_{n_j} <  a^{(k)}_{n_j} < (n_j+1)^{\alpha}, \cr 
a^{(k-1)}_{n+1} > & a^{(k)}_n > \max(a^{(k-1)}_{n+1} - 2^{-10k(1-\alpha)j/\alpha},a^{(k-1)}_n).\cr
\end{align}
This satisfies, in particular, the growth requirements on the sequence from Section \ref{sec:zero_decay}. For $k > 2^{j/\alpha}, n \in [n_j,n_{j+1}),$ we let $a^{(k)}_n$ be chosen arbitrarily satisfying (I.i) in the same section.
The definition implies, in particular, that 
$a^{(k)}_{n_j+1} > a^{(0)}_{n_j+k+1} -  \sum_{\ell \le k} 2^{-10\ell(1-\alpha)j/\alpha} > (n_j + k +1)^{\alpha} - c_{\alpha}2^{-10(1-\alpha)j/\alpha}.$ Therefore, 
$$|a^{(k)}_{n_j+1} - a^{(k)}_{n_j}| \ge (n_j + k + 1)^{\alpha} - (n_j + 1)^{\alpha} - 2^{-10(1-\alpha)j/\alpha} \ge \alpha k \cdot (n_j+k+1)^{\alpha-1} - 2^{-10(1-\alpha)j/\alpha}.$$
As $n_j > 2^{j/\alpha},$ the right hand side is controlled from below by a constand depending on $\alpha$ times $k2^{-\frac{(1-\alpha)j}{\alpha}}.$ As $n_{j+1} \le 2^{1/\alpha} 2^{j/\alpha},$
estimate \eqref{eq:gap1} is sharp for $k<2^{j/\alpha}.$ Replicating the same argument for all $j > 1$ and concatenating the sequences together implies the desired sharpness for all $k \ge 1.$ 

\smallskip
Nevertheless, a question still remaining is whether a decay better than \eqref{eq:gap1} can hold \emph{on average}. We have used this estimate on the gap between zeros of the $k-$th derivative
to obtain decay for $f^{(k)}$ pointwise. It could happen, though, that one obtains better decay averaging over large intervals, rather than doing pointwise evaluation. This intuitive thought 
is partially backed up by the fact that, for $n \in [n_j,n_{j+1}-k),$ the average gap 
$$|a^{(k)}_{n+1} - a^{(k)}_n|$$
is of the same order of $2^{-(1-\alpha)j/\alpha},$ as long as $n-k \sim 2^{j/\alpha}.$ We show here that this phenomenom does not happen in case the sequence
of zeros $\{a^{(k)}_n\}$ has structure similar to the counterexample above. Considering the bound \eqref{eq:Fourier_inverstion_step}, we wish to bound the average of $f^{(k)}$ over the interval $[2^j,2^{j+1}).$ A computation shows that
\begin{equation}\label{eq:average_bound}
\dashint_{2^j}^{2^{j+1}} |f^{(k)}(x)| \, \mmd x \lesssim \frac{1}{2^j} (2\pi)^k I_{k+1}(\widehat{f}) \left(\sum_{l=n_j-k}^{n_{j+1}} |a^{(k)}_{l+1} - a^{(k)}_l|^2\right).
\end{equation}
Notice that each of the $|a^{(k)}_{l+1}-a^{(k)}_l|$ terms is bounded by $C_{\alpha}\cdot (k+1) 2^{-(1-\alpha)j/\alpha},$ for some absolute $C_{\alpha}>0.$ Our problem is equivalent to the following: we have a sequence of $N$ non-negative
real numbers $\{c_j\}_{j=1}^N$ such that $\sum_{j=1}^N c_j = A$ and $0 < c_j \le B.$ What is the maximum of 
\begin{equation}\label{eq:sum_squares}
\sum_{j=1}^N c_j^2 ,
\end{equation}
and when is it attained? By fixing all but 2 variables, it is easy to see that the maximum of \eqref{eq:sum_squares} happens when the $c_j$ are all either $B$ or $0$. As 
\[
\sum_{j=1}^N c_j = A,
\]
it holds that the optimal value happens when there are $\sim A/B$ different $j'$s for which $c_j = B$, and then the maximal value of \eqref{eq:sum_squares} is $\sim B \cdot A.$ Applying this analysis to \eqref{eq:average_bound} yields that 
\begin{equation}\label{eq:average_does_not_work}
\dashint_{2^j}^{2^{j+1}} |f^{(k)}(x)| \, \mmd x \lesssim (2\pi)^k I_{k+1}(\widehat{f}) C_{\alpha} \cdot (k+1) 2^{-(1-\alpha)j/\alpha}, 
\end{equation}
as long as $k \le 2^{j/\alpha},$ which is essentially the same as we obtained before. In order to prove that there is a sequence with the behaviour described above, we define a sequence $\{a^{(k)}_n\}$ of the following form: on the interval $[n_j,n_j+k+1),$ 
we define our sequence exactly as in \eqref{eq:sequence_old}; we then do the same construction as in \eqref{eq:sequence_old} on $[n_j+k+1,n_j+2(k+1))$, but with $n_j+k+1$ in place of $n_j.$ Similarly, we do it for each of the $\sim 2^{j/\alpha}/k$ intervals 
of the form $[n_j + \ell(k+1),n_j+(\ell+1)(k+1)).$ The sequence obtained that way will nearly maximise the square sums, in the sense that there are going to be $\sim 2^{j/\alpha}/k$ terms close to $ \sim k 2^{-(1-\alpha)j/\alpha},$ and the remaining ones 
will be close to zero. A computation shows that the bound \eqref{eq:average_does_not_work} holds in the same way for this sequence. 

\smallskip
These examples indicate that not much more can be improved in our methods in terms of the range of exponents $A$ above without additional information about the location of the sequences of zeros $\{a^{(k)}_n\}_{k\ge 0, n \in \Z}.$  


\subsection{Generalisations of Theorem \ref{th:mainthm}}

\subsubsection{Conditions on the sets of zeros}\label{sec:cond_zeros}
One might wonder if the sequences in Theorem \ref{th:mainthm} being composed of powers and logarithms of integers plays an important role in our proofs, but it does not. The spacing of the zeros comes into the proofs in order to produce the first decay estimates, and for that the important piece of information that plays a role is the bound \eqref{eq:gap1}, which comes from the distance between two consecutive zeros of the derivatives of $f$, and the growth condition of the sequence of zeros of $f$ and $\widehat{f}$. In other words, if $f(\pm a_n)=f(\pm b_n)=0$, then it is sufficient to have two positive numbers  $\eta$ and $\omega$ such that
\begin{align}\label{eq:final_eq}\begin{split}
\eta\cdot\omega&>1, \\
|a_{k+n}-a_n|&\leq Ck|a_{k+n}|^{-\eta}, \\
|b_{k+n}-b_n|&\leq Ck|b_{k+n}|^{-\omega},
\end{split}
\end{align}
in order to apply the same procedure as in Lemma \ref{th:decay_I_k} and obtain the initial degree of exponential decay. Now, in order to optimise the exponent as in subsection \ref{sec:optimal_exp}, we need
\begin{align}\label{eq:final_eq2}\begin{split}
|a_n|&\leq Cn^{\frac{1}{1+\eta}}, \\
|b_n|&\leq Cn^{\frac{1}{1+\omega}},
\end{split}
\end{align}
where $(\alpha,\beta)=(\tfrac{1}{1+\eta},\tfrac{1}{1+\omega})$ belong to the region $A$ in Theorem \ref{th:mainthm}. This means our results are stable under small perturbations of the sequences of zeros. In fact one can even delete a large number of zeros and still get the same results. One should compare, for instance, to the interpolation result  \eqref{eq:interpolation_schwartz} mentioned in the introduction, 
whose proof, to the best of our knowledge, is rigid to the fact that the interpolation nodes are the square roots of the natural numbers, and the construction of the interpolation basis itself shows that one cannot remove any term from the sequence without breaking down the final result.

\subsubsection{Conditions on the functions} Another very natural question that arises from the results is if it is completely necessary to assume the functions involved are in the Schwartz class. Perhaps the result could hold with more relaxed conditions, but our proof rely heavily on finiteness of $I_k(f)$ and $I_k(\widehat{f})$ for every $k\geq 0$, and this implies, although not in a straightforward manner, that $f$ is a Schwartz function. For the sake of completeness, we outline the proof of this fact.

\smallskip
First of all, by Fourier inversion and the Riemann-Lebesgue lemma, finiteness of $I_k(\widehat{f})$ implies that $f$ is of $C^\infty$ class with all derivatives bounded and converging to zero at infinity. Now, we only need to prove polynomial decay of all the derivatives of $f$, and in order for that to be true we start by proving that $f$ has polynomial decay. For a fixed $N>0$, we define the set
$$E_{j,N}=E_j=\{x\in [2^j,2^{j+1}): |x|^Nf(x)>1\}.$$
It follows from Chebychev`s inequality that 
\begin{align*}
|E_j|\leq \int_{2^j}^{2^{j+1}}|f(x)||x|^N\dx\leq 2^{-jN}I_{2N}(f),
\end{align*}
This means there is $y\in E_j$ and $x\in [2^j,2^{j+1})\backslash E_j$ such that $|x-y|\leq 2^{-jN}I_{2N}(f)$. By the aforementioned fact that $f'$ is bounded, we have
\begin{align*}
|f(y)|&\leq |f(x)-f(y)|+|f(x)| \\
       &\leq C_f|x-y|+|x|^{-N} \\
       &\lesssim_{N,f} |y|^{-N}.
\end{align*}
Therefore $f$ has polynomial decay of any order. Now, in order to propagate this decay to every derivative, we combine the fact that $f''$ is a bounded function and $|f(x)|\lesssim |x|^{-N}$ with a Taylor series remainder argument in order to obtain $|f'(x)|\lesssim |x|^{-N/2}$. This implies polynomial decay for $f'$. Iterating this argument with higher order derivatives implies that $f$ is of Schwartz class.

\subsubsection{{Radial versions for higher dimensions}}{ A very natural generalisation one could think of is that of asking the same question for higher dimensional functions. Of course the notion of density would have to be redefined for general functions of several variables since one can easily construct functions that vanish along uncountable sets, such as manifolds, but if one restricts its attention to the case of radial functions similar questions will naturally arise. {In fact, if we consider $\mathcal{S}_{rad}(\R^d)$ to be the class of radial Schwartz class on $\R^d$, in \cite{CKMRV2} the authors study interpolation formulas in this radial setting, and dimensional differences come into the fold.} This motivates the question: for which exponents $(\alpha,\beta)$ does the pair $(\{n^\alpha\}_{n\in\Z_+},\{n^\beta\}_{n\in\Z_+})$ forms a Fourier uniqueness pair for $\mathcal{S}_{rad}(\R^d)$? Turns out in our setting the same ideas already introduced here apply to this problem, and we outline the steps here. 
}

\smallskip

{\emph{Step 1}: By replacing $f^{(k)}$ by the $k$-th order radial derivative $\partial_r^kf$, one can run the same game of intermediate zeros as in section \ref{sec:zero_decay} to get high order polynomial decay with loss on the constants involved in terms of $I_{k,d}(f)$ and $I_{k,d}(\widehat{f})$, where
\begin{align*}
    I_{k,d}(g)=\int_{\R^d}|g(x)||x|^k\mmd x.
\end{align*}
One can also obtain analogues of Lemmas \ref{th:decay_I_k} and \ref{th:I_k_improvement}. More precisely, one gets the analogue of inequality \eqref{eq:bounding_I_k_step_4} paying a dimensional constant, which means one can directly replicate Lemma \ref{th:decay_I_k} to obtain
\begin{align}\label{eq:first_exponential_decay_radial}
    |\widehat{f}(|\xi|)|&\lesssim_{f} e^{-(1-\theta)|\xi|^{\frac{1}{\epsilon}}}.
\end{align}
Lemma \ref{th:I_k_improvement} for the $d$-dimensional setting will read as the estimate
\begin{align*}
    I_{k,d}(f)\lesssim_{f,\delta,\theta} \Gamma(\delta(k+d)),
\end{align*}
which can be applied in the same fashion in the rest of the iteration procedures to reach the same order of decay.}

\smallskip

\smallskip
{\emph{Step 2}: Hadamard's theorem on distribution of zeros of entire functions fails to work in the same fashion for several complex variable functions, so one cannot do the simply extend the radial functions involved to $\C^d$. The alternative to this is observe that the Fourier transform of a radial function can be seen as a Hankel transform. We consider the following Hankel transform 
\begin{align*}
 \mathcal{H}_{\nu}(f)(\rho):=\int_0^{\infty}f(r)A_{\nu}(r\rho)\mmd r,
\end{align*}
where $\mathcal{A}_\nu(s)=(2\pi s)^{\nu}J_{\nu}(2\pi s)$, and $J_\nu$ is a Bessel function of first kind. In this setting, if we consider $\widetilde{f}(r)=f(r)r^{d-1}$, which has the same zeros as $f$, then 
\begin{align*}
 \widehat{f}(\xi)=(2\pi)^{\frac{d}{2}}\mathcal{H}_{\frac{d-2}{2}}(\widetilde{f})(|\xi|).
\end{align*}
By observing that the function $\mathcal{A}_{\frac{d-2}{2}}$ can be extended as a real entire function satisfying the estimate
\begin{align*}\label{eq:bessel_growth}
    |\mathcal{A}_{\frac{d-2}{2}}(\xi+i\eta)|\lesssim_d e^{2\pi|\eta|},
\end{align*}
it is clear that an analogue version of Lemma \ref{th:analytic_continuation} holds for the Hankel transform. 
}

\smallskip
{\emph{Step 3}: In order to finish, we now combine the analytic extension property of the Hankel transform and its connections with the Fourier transform mention in Step 2, together with the decay mentioned in Step 1, one can invoke Hadamard's theorem in the same fashion as before and conclude $f$ has to be the zero function, as long as $(\alpha,\beta)\in A$, where $A$ is the set introduced in Theorem \ref{th:mainthm}.
}

\subsection{Open problems} Comparing Theorem \ref{th:mainthm} and \eqref{eq:interpolation_schwartz}, we see that there is a gap in area between the two pictures. The $(\alpha,\beta) = (1/2,1/2)$ point considered by Radchenko and Viazovska possesses a `quasi-uniqueness' property, in the sense that there is essentially one real function who vanishes on the nodes $\pm \sqrt{n}$ and 
belongs to the Schwartz class. We believe that the question of denseness of the sequences $(\pm n^{\alpha},\pm n^{\beta})$ plays an important role in removing this rigidity condition, which is reflected on the following conjecture.

\begin{conjecture}\label{conj:full_range} Let $\alpha, \beta \in (0,1)$ be such that $\alpha + \beta < 1.$  If $f \in \mathcal{S}(\R)$ satisfies that $f(\pm n^{\alpha}) = \widehat{f}(\pm n^{\beta}) = 0$ for all $n \ge 0,$ then it holds that $f \equiv 0.$ 
\end{conjecture}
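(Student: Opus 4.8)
The plan is to push the method of Sections \ref{sec:proof_i}--\ref{sec:proof_ii} past the barrier imposed by the region $A$ by \emph{sharpening}, rather than abandoning, the decay-from-zeros mechanism, and then closing the argument with the Gelfand--Shilov uncertainty principle in place of Hadamard's theorem. First one reduces to the case of real $f$: replacing $f$ by its real and imaginary parts, and then by its even and odd parts, preserves both vanishing hypotheses, since these are symmetric under $x \mapsto -x$ and $\xi \mapsto -\xi$. Running the iteration of Sections \ref{sec:decay}--\ref{sec:optimal_exp} verbatim then yields the baseline decay \eqref{eq:best_exponential_decay}, i.e. $f$ lies in the Gelfand--Shilov space $\tilde{S}^{L_1(\alpha,\beta)}_{L_2(\alpha,\beta)}(\R)$ of the Remark. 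Since $S^{\nu}_{\mu}(\R) = \{0\}$ whenever $\nu + \mu < 1$ (the Beurling--H\"ormander form of the uncertainty principle), the conjecture would follow if the pair of exponents could be improved from $\bigl(L_1(\alpha,\beta), L_2(\alpha,\beta)\bigr) = \bigl(\tfrac{\alpha}{1-\alpha-\beta}, \tfrac{\beta}{1-\alpha-\beta}\bigr)$ to \emph{any} pair with sum $<1$; the natural target is $(\alpha,\beta)$ itself, which matches the Radchenko--Viazovska endpoint $(\tfrac12,\tfrac12)$, where $\alpha+\beta=1$ is precisely the borderline of triviality and the extremal function has Gaussian decay.

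The crux is to remove the losses in the iteration, and there is exactly one: the factor $(k+1)$ in the gap estimate \eqref{eq:gap1}. If it could be replaced by $(k+1)^{\omega}$ with $\omega < 1$, the recursion \eqref{eq:recursion} would converge to strictly smaller limits, and for $\omega$ small enough (depending on $\alpha,\beta$) these would sum to less than $1$, whence the conjecture by the uncertainty principle above --- or, as already remarked in Section \ref{sec:comments}, by Hadamard's theorem applied to one of $f$, $\widehat{f}$. Now, Section \ref{sec:comments} shows that \eqref{eq:gap1} is sharp, and sharp even on average, but the sequences constructed there witness only the zero set $\{\pm n^{\alpha}\}$ of $f$ and make no use of the vanishing of $\widehat{f}$ on $\{\pm n^{\beta}\}$. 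The proposal is to exploit precisely this coupling: one would attempt a self-improving scheme in which the decay $|f| \lesssim_{\varepsilon} e^{-c|x|^{1/(\nu+\varepsilon)}}$ and $|\widehat{f}| \lesssim_{\varepsilon} e^{-c|\xi|^{1/(\mu+\varepsilon)}}$ already in hand is fed back through the Fourier-inversion bound \eqref{eq:Fourier_inverstion_step}. The idea is that an anomalously long gap $|a^{(k)}_{n+1}-a^{(k)}_n|$ between consecutive zeros of $f^{(k)}$ would permit $f^{(k)}$ --- and hence, after integrating back $k$ times along the interlacing zeros, $f$ itself --- to be larger on that interval than the established decay allows; quantifying this should force the gaps of the derivative families to be better distributed than the generic bound, improve the integrals $I_k(f)$ and $I_k(\widehat{f})$ controlled in Lemma \ref{th:decay_I_k}, and close the loop at a strictly better exponent pair.

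A complementary, purely complex-analytic endgame is available whenever the bootstrap stalls at some $(\nu_*,\mu_*)$ with $\nu_* < 1$: propagate the real-line decay of $\widehat{f}$ into a thin sector $|\arg z| < \delta$ by a Phragm\'en--Lindel\"of-type argument, rotate so that the zeros $\pm n^{\beta} e^{-i\delta}$ leave the real axis, and apply Carleman's formula in the resulting half-plane, where those zeros contribute a term of size $\gtrsim_{\delta} R^{1/\beta-1}$. I expect, however, this route to meet the same wall as Hadamard's theorem: the coarse growth bound $e^{|z|^{\rho}}$ on the bulk of the bounding semicircle still dominates the gain harvested from the fast decay near the axis, so it cannot by itself reach beyond $A$.

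The step I expect to be the genuine obstacle is the self-improving estimate of the second paragraph --- converting the mere \emph{location} of the zeros of $\widehat{f}$ into quantitative rigidity for the zeros of the derivatives $f^{(k)}$. The counterexamples of Section \ref{sec:comments} show that no argument using only the interlacing structure and the growth rate of a single zero sequence can beat $\bigl(L_1(\alpha,\beta),L_2(\alpha,\beta)\bigr)$; one genuinely needs to couple the two zero sets tightly, and in the absence of a summation or interpolation identity adapted to the nodes $\{n^{\alpha}\}$ --- the kind of modular-forms input available only for $\alpha = \tfrac12$ --- it is unclear how to do so. For this reason the full range $\alpha + \beta < 1$ seems, at present, out of reach of the techniques developed here.
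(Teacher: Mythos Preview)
This statement is presented in the paper as a \emph{conjecture}, not a theorem; the paper gives no proof and explicitly says its techniques ``do not seem to be immediately susceptible to being generalised in order to conclude the full conjecture.'' Your proposal, accordingly, is not a proof either --- as you yourself conclude in the final paragraph --- so there is no paper argument to compare against. What remains is to assess whether the strategy you outline could plausibly be completed.

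The concrete gap is the ``self-improving scheme'' of your second paragraph: you propose that the exponential decay already established for $f$ should force the gaps $|a^{(k)}_{n+1} - a^{(k)}_n|$ below the sharp bound \eqref{eq:gap1}, but you supply no mechanism for this. The counterexamples in Section \ref{sec:comments} show that the interlacing structure alone is exhausted, and nothing in your proposal explains how the simultaneous vanishing of $\widehat{f}$ on $\{\pm n^{\beta}\}$ would translate into rigidity for the zero sets of the derivatives $f^{(k)}$; this coupling is precisely the missing idea, and you name it but do not supply it. Your Gelfand--Shilov endgame is correct as stated but not an advance over the paper: triviality of $S^{\nu}_{\mu}$ for $\nu + \mu < 1$, applied to the baseline pair $\bigl(L_1(\alpha,\beta), L_2(\alpha,\beta)\bigr)$, requires $\frac{\alpha+\beta}{1-\alpha-\beta} < 1$, i.e.\ $\alpha + \beta < \tfrac{1}{2}$, which lies strictly inside the region $A$ already covered by Hadamard's theorem (indeed, for $\alpha \le \beta$ one checks $(1/2 - \beta)(2-\beta) \le (1-\beta)^2$, so $\alpha < 1/2 - \beta$ implies the defining inequality of $A$). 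The Carleman/Phragm\'en--Lindel\"of alternative you dismiss for the right reason. In short, your analysis accurately reproduces the paper's own diagnosis of why the conjecture is open, but does not advance beyond it.
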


Of course, Theorem \ref{th:mainthm} is partial progress towards this conjecture, but our techniques do not seem to be immediately susceptible to being generalised in order to conclude the full conjecture. On the other hand, another interesting problem that, as far as we know, is still largely unexplored is that of sequences that grow roughly as a power of an integer, but do not posses as strong tightness properties 
as in Section \ref{sec:cond_zeros} above. 

\begin{question} Let $\alpha, \beta \in (0,1)$ be such that $\alpha + \beta < 1.$ Under which conditions does it hold that, for two sequences $(\pm c_n, \pm d_n)_{n \ge 0}$ such that 
\begin{align*}
\lim_{n \to \infty} \frac{d_n}{n^{\beta}}, \lim_{n \to \infty} \frac{c_n}{n^{\alpha}} < + \infty
\end{align*}
and a function $f \in \mathcal{S}(\R)$ such that $f(\pm c_n) = \widehat{f}(\pm d_n) = 0, \forall n \ge 0,$ then $f \equiv 0$? 
\end{question}
The first natural guess is that a result of that kind should hold in the same range as Conjecture \ref{conj:full_range}, but it would already be interesting if one could prove that the uniqueness property holds under the assumptions in Theorem \ref{th:mainthm}. Finally, our last question concerns what happens on the critical case of Theorem \ref{th:mainthm}.

\begin{question} Let $\alpha, \beta \in (0,1)$ be such that $\alpha + \beta = 1.$ Suppose $f \in \mathcal{S}(\R)$ is a real function such that $f(\pm an^{\alpha}) = \widehat{f}(\pm bn^{\beta}) = 0$ holds for each natural number $n \ge 0.$ Under which conditions on $a,b > 0$ does it holds that $f\equiv 0$? 
\end{question}

This type of questions remains heavily unexplored even in the $\alpha = \beta = \frac{1}{2}$ case, where we believe that a combination of our present techniques with those of \cite{RV} may be useful. 

\section*{Acknowledgments}
We would like to especially thank Felipe Gonçalves and Danylo Radchenko for discussing these problems and ideas about Fourier uniqueness pairs with us. We would also like to thank Christoph Thiele for helpful suggestions during the development of this manuscript. We also thank Emanuel Carneiro and Lucas Oliveira for helpful comments and remarks. Finally, J.P.G.R. acknowledges financial support from the Deutscher Akademischer Austauschdienst (DAAD).


\begin{thebibliography}{99}

\bibitem{Beurling}
\textsc{A. Beurling},
\newblock \emph{Balayage of Fourier-Stieltjes transforms}, Collected Works of Arne Beurling, vol. 2, Harmonic Analysis, Birkhäuser, Boston, 1989.

\bibitem{CCK}
\textsc{J. Chung, S.-Y. Chung and D. Kim,}
\newblock \emph{Characterizations of the Gelfand-Shilov spaces via Fourier transforms},
\newblock Proc. Amer. Math. Soc. \textbf{124} (1996), no. 7, 2101--2108.

\bibitem{CE03}
\textsc{H. Cohn and N. Elkies},
\newblock \emph{New upper bounds on sphere packings I}, 
\newblock Ann. of Math. \textbf{157} (2003), no. 2,  689–714.


\bibitem{CohnGoncalves}
\textsc{H. Cohn and F. Gon\c calves},
\newblock \emph{An optimal uncertainty principle in twelve dimensions via modular forms},
\newblock Invent. Math. \textbf{217} (2019), no. 3, 799--831.

\bibitem{CKMRV}
\textsc{H. Cohn, A. Kumar, S. D. Miller, D. Radchenko and M. Viazovska,}
\newblock \emph{The sphere packing problem in dimension 24}, 
\newblock Ann. of Math. (2) \textbf{185} (2017), no. 3, 1017–1033.

\bibitem{CKMRV2}
\textsc{H. Cohn, A. Kumar, S. D. Miller, D. Radchenko and M. Viazovska,}
\newblock \emph{Universal optimality of the $E_8$ and Leech lattices and interpolation formulas},
\newblock preprint at arXiv:1902.05438v2.


\bibitem{GS1968}
\textsc {I.M. Gel'fand and G. \v{S}ilov,}
\newblock {Generalized functions. Vol. 2, Spaces of fundamental and generalized functions},
\newblock Academic Press, New York-London, 1968.

\bibitem{GS19}
\textsc{S. Ghosh and R.K Srivastava,}
\newblock \emph{Heisenberg Uniqueness pairs for the Fourier transform on the Heisenberg group},
\newblock preprint at arXiv:1810.06390v4. 

\bibitem{HMR}
\textsc{H. Hedenmalm and A. Montes-Rodríguez,}
\newblock \emph{Heisenberg uniqueness pairs and the Klein--Gordon equation}, 
\newblock Ann. of Math. (2) \textbf{173} (2011), no. 3, 1507-1527.

\bibitem{Kahane}
\textsc{J. Kahane,} 
\newblock \emph{Sur les fonctions moyenne-périodiques bornées} (French), Ann. Inst. Fourier, Grenoble \textbf{7} (1957), 293–314.

\bibitem{Lev}
\textsc{N. Lev,}
\newblock \emph{Uniqueness theorems for Fourier transforms},
\newblock Bull. Sci. Math. \textbf{135} (2011), no. 2, 134–140.

\bibitem{LO13}
\textsc{N. Lev and A. Olevskii,}
\newblock \emph{Measures with uniformly discrete support and spectrum},
C. R. Math. Acad. Sci. Paris \textbf{351} (2013), no. 15-16, 613-617.

\bibitem{LO15}
\textsc{N. Lev and A. Olevskii,}
\newblock \emph{Quasicrystal and Poisson's summation formula},
Invent. Math. \textbf{200} (2015), no. 2, 585-606.

\bibitem{Meyer}
\textsc{Y. Meyer,} 
\newblock \emph{Measures with locally finite support and spectrum}, 
\newblock Rev. Mat. Iberoam. \textbf{33} (2017), no. 3, 1025--1036.

\bibitem{RV}
\textsc{D. Radchenko and M. Viazovska,}
\newblock \emph{Fourier interpolation on the real line},
\newblock Publ. Math. Inst. Hautes Études Sci. \textbf{129} (2019), 51–81.

\bibitem{Viazovska8}
\textsc{M. Viazovska,}
\newblock \emph{The sphere packing problem in dimension 8}, 
\newblock Ann. of Math. (2) \textbf{185} (2017), no. 3, 991–1015.



\end{thebibliography}
\end{document}